\documentclass[11pt]{article}

\usepackage[a4paper,margin=1in]{geometry}
\usepackage{amsmath,amssymb,amsfonts,amsthm}
\usepackage{mathrsfs}
\usepackage{mathtools}
\usepackage{enumitem}
\usepackage{hyperref}
\usepackage{color}
\usepackage{tikz-cd}
\usepackage{fancyhdr}

\hypersetup{
    colorlinks=true,
    linkcolor=blue,
    citecolor=blue,
    urlcolor=blue
}

\newtheorem{theorem}{Theorem}[section]
\newtheorem{proposition}[theorem]{Proposition}
\newtheorem{lemma}[theorem]{Lemma}
\newtheorem*{theoremA}{\textbf{Theorem A}}

\newtheorem*{corollaryA}{\textbf{Corollary A}}
\newtheorem*{corollaryB}{\textbf{Corollary B}}

\newtheorem{corollary}[theorem]{Corollary}
\newtheorem{conjecture}[theorem]{Conjecture}

\theoremstyle{definition}

\theoremstyle{remark}
\newtheorem{remark}[theorem]{Remark}

\title{Koszul duality for finite-dimensional absolutely Koszul algebras}

\author{
A. M. Bouhada 
\date{}
}

\begin{document}

\maketitle
\pagestyle{plain}

\begin{abstract}
Let \(\Lambda\) be a finite-dimensional Koszul algebra. Using a description of the linear part of minimal projective resolutions in terms of Koszul duality, we prove that \(\Lambda\) is absolutely Koszul if and only if its Koszul dual \(\Lambda^{!}\) is graded left co-coherent.
We further show that an absolutely Koszul algebra of finite global linearity defect has finite graded finitistic dimension.
Finally, we refine Koszul duality for finite-dimensional absolutely Koszul algebras. As a further application, we answer a question of Green et al.~\cite{12} by means of the Koszul dual algebra.
\end{abstract}

\begin{center}
\section{Introduction}
\end{center}

Absolutely Koszul algebras form an important class of Koszul algebras. They were introduced by Iyengar and R\"omer in the commutative setting~\cite{15}; see also~\cite{14}. They are closely related to the study of the linear part of free resolutions over exterior algebras developed by Eisenbud, Fl{\o}ystad, and Schreyer~\cite{11}. The corresponding homological condition had already appeared in the noncommutative setting, notably in the work of Mart\'inez-Villa and Zacharia~\cite{19}, although the terminology \emph{absolutely Koszul} was not used there. We adopt this terminology throughout, while keeping in mind that absolutely Koszul algebras in the commutative setting satisfy additional properties that need not persist in the noncommutative case.

Despite the extensive study of absolute Koszulity in commutative algebra, its noncommutative counterpart remains less well understood, particularly from the viewpoint of the representation theory of finite-dimensional algebras. In our previous work~\cite{5}, we proved that quadratic monomial algebras are absolutely Koszul. More precisely, if \(\Lambda\) is a quadratic monomial algebra, then its global linearity defect is at most one. This contrasts with the commutative case, where quadratic monomial algebras need not be absolutely Koszul; see, for instance,~\cite{10}.

Our interest in these algebras is primarily motivated by the derived and singular Koszul dualities established in~\cite{4}. For a finite-dimensional Koszul algebra \(\Lambda\), we constructed triangulated equivalences
\[
\begin{tikzcd}[column sep=huge]
\mathsf{D}^{b}\!\bigl(
\Lambda^{!}\textup{-}\mathrm{Cop}^{\mathbb{Z}}
\bigr)
\arrow[r, bend left=35, "\mathfrak{F}"]
&
\mathsf{D}^{b}\!\bigl(
\Lambda\textup{-}\mathrm{gmod}
\bigr)
\arrow[l, bend left=35, "\mathfrak{G}"']
\end{tikzcd}
\]
and
\[
\begin{tikzcd}[column sep=huge]
\mathsf{D}^{b}\!\left(
\Lambda^{!}\textup{-}\mathrm{Cop}^{\mathbb{Z}}
\big/
\Lambda^{!}\textup{-}\mathrm{gmod}
\right)
\arrow[r, bend left=35, "\mathfrak{F}"]
&
\mathsf{D}_{\mathrm{sg}}\!\bigl(
\Lambda\textup{-}\mathrm{gmod}
\bigr)
\arrow[l, bend left=35, "\mathfrak{G}"']
\end{tikzcd}
\]
where \(\Lambda^{!}\textup{-}\mathrm{Cop}^{\mathbb{Z}}\) denotes the exact category of coperfect graded \(\Lambda^{!}\)-modules, namely, graded modules admitting finite injective coresolutions whose terms are finitely cogenerated injective modules.

These equivalences naturally lead to the problem of understanding the intrinsic structure of the category
\(\Lambda^{!}\textup{-}\mathrm{Cop}^{\mathbb{Z}}\).
This category is abelian precisely when the Koszul dual \(\Lambda^{!}\) is graded left co-coherent. In that case,
\(\Lambda^{!}\textup{-}\mathrm{Cop}^{\mathbb{Z}}\)
coincides with the category of finitely copresented graded \(\Lambda^{!}\)-modules, and one may form the abelian quotient
\(
\Lambda^{!}\textup{-}\mathrm{Cop}^{\mathbb{Z}}
\big/
\Lambda^{!}\textup{-}\mathrm{gmod}.
\)
This quotient is of particular interest in noncommutative geometry; see~\cite{22,23}. It plays a role analogous to that of the tails category in the construction of noncommutative projective schemes in the noetherian setting; see~\cite{2}. In~\cite{5}, we showed that quadratic monomial algebras satisfy the relevant coherence and co-coherence conditions.

These observations suggest a close connection between homological properties of \(\Lambda\) and co-coherence properties of its Koszul dual. We are therefore led to the following question.

\medskip

\noindent\textbf{Question.}
\emph{For which finite-dimensional Koszul algebras \(\Lambda\) is the category
\(\Lambda^{!}\textup{-}\mathrm{Cop}^{\mathbb{Z}}\) abelian?
Equivalently, when is \(\Lambda^{!}\) graded left co-coherent?
Can this condition be characterized intrinsically in terms of homological properties of \(\Lambda\), and in particular in terms of absolute Koszulity?}

\medskip

Related questions were studied by Yanagawa~\cite{26,27}, who used Koszul duality~\cite{3} to relate linearity defect and regularity over \(\Lambda\) to homological properties of modules over its Koszul dual. Recall that \(\Lambda\) is absolutely Koszul precisely when every finite-dimensional graded \(\Lambda\)-module has finite linearity defect. Yanagawa proved, in particular, that absolute Koszulity implies a coherence property for the Koszul dual. His convention for Koszul duality differs from ours by passage to the opposite algebra, whereas our convention agrees with that of~\cite{3}.

Conversely, Mart\'inez-Villa and Zacharia~\cite{19} proved absolute Koszulity under the stronger assumption that the Koszul dual is graded left noetherian. Thus, their result gives a converse to the implication obtained by Yanagawa, but under a substantially stronger finiteness hypothesis. The theorem below shows that noetherianity is not necessary: in our convention, graded left co-coherence is the appropriate condition.

Our approach is based on an explicit realization of graded derived Koszul duality. This allows us to identify absolute Koszulity directly with a finiteness property on the Koszul dual side and yields the following characterization.

\begin{theoremA}
Let \(\Lambda\) be a finite-dimensional Koszul algebra. Then \(\Lambda\) is absolutely Koszul if and only if its Koszul dual \(\Lambda^{!}\) is graded left co-coherent.
\end{theoremA}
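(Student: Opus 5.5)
The plan is to go through the graded Koszul duality functor $\mathfrak{G}$ and compare two finiteness conditions: finite linearity defect on the $\Lambda$ side, and finite copresentation on the $\Lambda^{!}$ side. Since $\Lambda$ is finite-dimensional, $\Lambda^{!}$ is locally finite and generated in degrees $0$ and $1$.

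\textbf{Step 1: linear parts of resolutions.} For a finite-dimensional graded $\Lambda$-module $M$, I would take the minimal graded projective resolution $P_\bullet\to M$ and its linear part $\mathrm{lin}(P_\bullet)$. I expect to show that, under Koszul duality, $\mathrm{lin}(P_\bullet)$ corresponds to a minimal complex of finitely cogenerated injective graded $\Lambda^{!}$-modules, obtained from $\mathfrak{G}(M)$. Concretely, the term $P_i$ carries the graded vector space $\mathrm{Ext}^i_\Lambda(M,\Lambda_0)$, and the linear part of the differential is the action of $\Lambda^{!}_1=\mathrm{Ext}^1_\Lambda(\Lambda_0,\Lambda_0)$. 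The homology of $\mathrm{lin}(P_\bullet)$ in homological degree $i$ should then match, up to a regrading, the cohomology of this injective complex. Under this dictionary, the statement $\mathrm{ld}(M)\le d$ (that is, $H_i(\mathrm{lin}P_\bullet)=0$ for $i>d$) becomes the statement that a certain graded $\Lambda^{!}$-module has a minimal injective copresentation which stays exact beyond step $d$.

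\textbf{Step 2: ($\Rightarrow$).} Assume $\Lambda$ is absolutely Koszul. To prove co-coherence, I would take a finitely copresented graded module $N=\ker(I^0\to I^1)$ with $I^0,I^1$ finitely cogenerated injectives. The goal is to show that $N$ admits a finite-length coresolution by finitely cogenerated injectives, equivalently that the kernel of any map between finitely cogenerated injectives is again finitely copresented. Write $D(\Lambda^{!})$ for the graded dual of $\Lambda^{!}$. Each indecomposable such injective is a shift of a summand of $D(\Lambda^{!})$, which corresponds to a simple $\Lambda$-module, and a map $I^0\to I^1$ corresponds to a map between finite sums of shifted simple $\Lambda$-modules. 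Using the derived equivalence, I would realize $N$, after truncation, as the injective "linear part" attached to a finite-dimensional $\Lambda$-module $M$. This $M$ is built, for example, as an iterated extension or cone read off from the map $I^0\to I^1$. Finite linearity defect of $M$ then yields exactness of the linear strand from degree $\mathrm{ld}(M)$ on, and Step 1 converts this into a finite copresentation of $N$ at every stage.

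\textbf{Step 3: ($\Leftarrow$).} Assume $\Lambda^{!}$ is graded left co-coherent, and let $M$ be finite-dimensional. Then $\Lambda^{!}$-$\mathrm{Cop}^{\mathbb{Z}}$ is abelian and equal to the finitely copresented modules, so $\mathfrak{G}(M)$ is a bounded complex in an abelian category whose cohomology modules are finitely copresented. Each such module then has a minimal injective coresolution. I would argue that, in sufficiently high degrees, the minimal injective coresolution of $\mathfrak{G}(M)$ becomes "linear": the cohomology is concentrated in a bounded range of internal degrees, and finitely cogenerated injectives live in bounded-above degrees. Step 1 then gives that $H_i(\mathrm{lin}P_\bullet)$ vanishes for $i\gg0$, so $\mathrm{ld}(M)<\infty$.

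\textbf{Main obstacle.} I expect the hardest point to be the precise dictionary in Step 1, and especially its use in Step 2. There one must realize an arbitrary map between finitely cogenerated injectives as coming from a finite-dimensional $\Lambda$-module, and track the grading shifts carefully enough to ensure that exactness of the linear part in high homological degrees translates into finite cogeneration of the kernels. My first attempt would be to prove Step 1 as a lemma, namely $\mathrm{lin}(P_\bullet)\cong$ the injective complex $\mathfrak{G}$-dual to $M$, with explicit grading. I would then do both directions by induction on Loewy length, using short exact sequences $0\to\mathrm{rad}\,M\to M\to M/\mathrm{rad}\,M\to0$.
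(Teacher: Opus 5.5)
There is a genuine gap, and it sits in Step~1, on which both directions depend. The cohomology modules of $\mathfrak{G}(M)$ do not compute the homology of $\mathrm{Lin}^{\Lambda}(P^{\bullet})$. By Proposition~3.4 they compute its \emph{terms}: $H^{-i}(\mathfrak{G}(M))_m(x)\cong D\operatorname{Ext}^{i-m}_{\Lambda\textup{-gmod}}(M,S_x\langle m\rangle)$, and the $\Lambda^{!}$-action encodes the linear differentials. Your ``concretely'' sentence is right about this. The correct dictionary is Proposition~3.5: $\mathrm{Lin}^{\Lambda}(P^{\bullet})\cong\mathfrak{F}\bigl(\bigoplus_i H^{i}(\mathfrak{G}(M))[-i]\bigr)$. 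So the linear part is the Koszul complex of these cohomology modules. Its homology is bounded iff each $\mathfrak{F}(H^{i}(\mathfrak{G}(M)))$ has bounded cohomology, iff each $H^{i}(\mathfrak{G}(M))$ is coperfect ([4, Prop.~3.4]). Thus $\mathrm{ld}_\Lambda(M)<\infty$ exactly when all cohomology modules of $\mathfrak{G}(M)$ lie in $\Lambda^{!}\textup{-Cop}^{\mathbb{Z}}$.

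With this dictionary your Step~3 closes at once: co-coherence makes $\Lambda^{!}\textup{-Cop}^{\mathbb{Z}}$ abelian, so the bounded complex $\mathfrak{G}(M)$ has coperfect cohomology. Without it, your argument does not go through. The claim that the cohomology lives in a bounded range of internal degrees is false: if $\operatorname{pd}M=\infty$, some $H^{j}(\mathfrak{G}(M))$ is nonzero in infinitely many internal degrees. The induction on Loewy length is unnecessary, and nothing in your plan explains why finite linearity defect of $\mathrm{rad}\,M$ and $M/\mathrm{rad}\,M$ would pass to $M$.

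In Step~2 the target is misstated and the mechanism is missing. The kernel of a map between finitely cogenerated injectives is finitely copresented by definition, so that is not what needs proving. What has to be shown is that every finitely copresented $N$ is \emph{coperfect}. Co-coherence then follows from closure of $\Lambda^{!}\textup{-Cop}^{\mathbb{Z}}$ under cokernels of monomorphisms. Also, $N$ is generally not a cohomology module of $\mathfrak{G}(M)$ for a single finite-dimensional $M$. So ``an iterated extension or cone read off from the map'' does not by itself produce anything to which absolute Koszulity applies.

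The paper's device is as follows.
\begin{enumerate}
\item Form $J^{\bullet}=(J^{0}\to J^{1})$ with $H^{0}(J^{\bullet})=N$. Then $\mathfrak{F}(J^{\bullet})$ is a right bounded complex of finitely generated projectives whose cohomology vanishes below some degree $n$.
\item The stupid truncation at $n$ gives a triangle relating $\mathfrak{F}(J^{\bullet})$, a single finite-dimensional module placed in degree $n$, and a bounded complex of finitely generated projectives.
\item Apply $\mathfrak{G}$. The module term has coperfect cohomology, by absolute Koszulity plus the corrected dictionary. The projective term has finite-dimensional cohomology, because $\mathfrak{G}(P_x)$ is an injective coresolution of $S^{!}_x$.
\item Use the long exact cohomology sequence. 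Finite-dimensional $\Lambda^{!}$-modules are coperfect, and $\Lambda^{!}\textup{-Cop}^{\mathbb{Z}}$ is closed under extensions, kernels of epimorphisms and cokernels of monomorphisms. Together these force $N$ to be coperfect.
\end{enumerate}
You need both the reduction to a single module and the control of the error term; neither appears in your proposal.
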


\medskip

This result also sheds light on coherence phenomena for Koszul algebras of finite global dimension; compare with~\cite[Conjecture~1.3]{22}.

\medskip

Our second main result concerns the graded finitistic dimension. Using the linear part of minimal projective resolutions together with Koszul duality, we obtain the following finiteness criterion.

\begin{corollaryA}
Let \(\Lambda\) be a finite-dimensional absolutely Koszul algebra. If the global linearity defect of \(\Lambda\) is finite, then the graded finitistic dimension of \(\Lambda\) is finite.
\end{corollaryA}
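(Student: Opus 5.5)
We prove Corollary A through the linear part of minimal graded projective resolutions, measured by the linearity defect, together with the Koszul-dual description of that linear part. Put \(d:=\operatorname{gld}(\Lambda)<\infty\), the global linearity defect, so \(\operatorname{ld}(M)\le d\) for every finite-dimensional graded \(\Lambda\)-module \(M\). Let \(M\) be a finitely generated graded \(\Lambda\)-module with \(\operatorname{pd}M<\infty\). The goal is a bound on \(\operatorname{pd}M\) that depends only on \(\Lambda\).

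\emph{Step 1: reduce to a Koszul syzygy.} Let \(P_\bullet\to M\) be the minimal graded projective resolution. Since \(\operatorname{ld}(M)\le d\), the linear part \(\operatorname{lin}(P_\bullet)\) is acyclic in homological degrees \(>d\). By the standard argument (Yanagawa, Iyengar--Römer), the syzygy \(N:=\Omega^{d}M\) is then a direct sum of shifted Koszul modules, i.e.\ modules with linear resolutions up to shift. Moreover \(\operatorname{pd}M\le d+\operatorname{pd}N\), and \(\operatorname{pd}N<\infty\). It therefore suffices to bound the projective dimension of Koszul modules of finite projective dimension, uniformly.

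\emph{Step 2: translate via Koszul duality.} For a Koszul module \(K\), generated in degree \(0\), the Koszul dual \(E(K)=\bigoplus_i\operatorname{Ext}^i_\Lambda(K,\Lambda_0)\) is a finitely generated graded \(\Lambda^{!}\)-module concentrated in degrees \(\ge0\). Its Hilbert function in degree \(i\) is the \(i\)-th Betti number of \(K\). Hence \(\operatorname{pd}K<\infty\) exactly when \(E(K)\) is finite-dimensional, and then \(\operatorname{pd}K\) equals the top degree of \(E(K)\). In the paper's dual formulation this is a coperfect graded module lying in \(\Lambda^{!}\textup{-}\mathrm{gmod}\).

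We then use Theorem A: \(\Lambda^{!}\) is graded left co-coherent. The Koszul duality of the introduction, \(\mathfrak F,\mathfrak G\), restricted to linear complexes, identifies Koszul \(\Lambda\)-modules with finitely (co)presented \(\Lambda^{!}\)-modules generated in one degree. Under this identification, finite-dimensionality of \(E(K)\) means that the module is killed by \(\Lambda^{!}_{\ge n}\) for some \(n\).

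\emph{Step 3: the uniform bound (main obstacle).} We need one \(n\), depending only on \(\Lambda\), that works for all \(K\). We first try the following. Each \(E(K)\) is a quotient of \((\Lambda^{!})^{r}\) with \(r=\dim K_0\). Since \(K\) is a Koszul \(\Lambda\)-module, it has a Koszul filtration whose layers are built from simple modules. Using \(\operatorname{ld}\le d\) once more, applied to \(\Lambda_0\)-tops, we would reduce to cyclic Koszul modules \(\Lambda e/I\) with \(I\) generated in degree \(1\). There are only finitely many of these up to the Grassmannian of subspaces of \(\Lambda_1 e\).

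For those that have finite projective dimension, we use graded co-coherence to show the top degree is bounded along this algebraic family. Finite-dimensionality is a constructible condition on the Grassmannian, since it is detected by the vanishing of Hilbert-function values computed from finitely copresented data. Semicontinuity then gives a uniform top degree \(n_0\).

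The delicate point is making this reduction to cyclic modules rigorous. Extensions of Koszul modules of finite projective dimension need not have layers of finite projective dimension. We expect instead to argue inductively on \(\dim K_0\) using the long exact Ext sequence, where a bound for the layers that do have finite projective dimension suffices. This yields a bound of the form \(\operatorname{pd}N\le n_0+\text{const}\), and hence \(\operatorname{fin.dim}^{\mathbb Z}\Lambda\le d+n_0+\text{const}<\infty\).
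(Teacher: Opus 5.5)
\textbf{The gap is Step~3, which carries the whole difficulty.}

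Steps~1--2 reduce, as the paper also does, to a \emph{uniform} bound on $\operatorname{pd}K$ over all Koszul modules $K$ with $\operatorname{pd}K<\infty$. The paper does not prove this bound. It imports it from Green et al.\ (reference [12], Theorem~4.5), transports it through $\mathfrak{F}$ to a bound on the graded lengths of the colinear cohomologies $H^{j}(I^{\bullet})$, and concludes with Theorem~3.10.

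Your argument for the bound does not close. Semicontinuity arguments of this kind can at best give a bound within a single family of finite type, for example modules with a fixed number of generators. But $\dim K_0$ is unbounded, and the proposed reduction to cyclic modules has no mechanism behind it. As you note yourself, the layers of a filtration of a module of finite projective dimension can have infinite projective dimension, and then the long exact Ext sequence gives no bound at all. This is exactly the obstruction in finitistic-dimension problems, so ``induct on $\dim K_0$'' is not an argument. Graded co-coherence, which you invoke here, plays no role.

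Step~1 also needs a correction. A weakly Koszul module is in general \emph{not} a direct sum of shifted Koszul modules: the ideal $(x,yz)$ of the exterior algebra on $x,y,z$ is weakly Koszul and indecomposable, yet generated in degrees $1$ and $2$. What is true, and suffices, is that the linear part of the minimal resolution of $\Omega^{d}M$ splits into linear strands. Each strand is a minimal linear resolution of a shifted Koszul module $K_j$, and the terms of the strands add up to the terms of the minimal resolution, so $\operatorname{pd}\Omega^{d}M=\max_j\operatorname{pd}K_j$. This is what the paper's statement that all $H^{j}(I^{\bullet})$ are colinear encodes.

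\textbf{A self-contained Step~3.} You can use Hilbert series instead of moduli.
\begin{itemize}
\item Let $H_\Lambda(t)$ be the $Q_0\times Q_0$ matrix of Hilbert series of the indecomposable projectives. Its entries have degree at most $\ell-1$, where $\ell$ is the Loewy length, and $H_\Lambda(0)$ is the identity.
\item For a Koszul module $K$, let $H_K(t)$ be the row vector of its Hilbert series at the vertices. Let $\beta_K(t)=\sum_i b_i t^i$, where $b_i$ is the row vector of multiplicities of the $P_y$ in the $i$-th term of the linear resolution.
\item Exactness of the resolution in each internal degree gives $\beta_K(-t)\,H_\Lambda(t)=H_K(t)$. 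Hence
\[
\beta_K(t)\,\det H_\Lambda(-t)\;=\;H_K(-t)\,\operatorname{adj}H_\Lambda(-t).
\]
\item Since $K$ is generated in degree $0$, $H_K$ has degree at most $\ell-1$. The adjugate has entries of degree at most $(|Q_0|-1)(\ell-1)$, so the right-hand side has degree at most $|Q_0|(\ell-1)$.
\item The determinant $\det H_\Lambda(-t)$ has constant term $1$. So if $\operatorname{pd}K<\infty$, then $\operatorname{pd}K=\deg\beta_K\le|Q_0|(\ell-1)$.
\end{itemize}
Combining this with the strand decomposition and $\operatorname{pd}M\le d+\operatorname{pd}\Omega^{d}M$ gives
\[
\mathrm{grfin\,dim}\,\Lambda\le \mathrm{gl\,ld}_\Lambda+|Q_0|(\ell-1).
\]
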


\medskip

Finally, Theorem~A allows us to refine the graded and ungraded Koszul dualities established in~\cite{4}.

\begin{corollaryB}
Let \(\Lambda\) be a finite-dimensional absolutely Koszul algebra. Then the following triangulated equivalences hold.

\begin{enumerate}
\itemsep=0.4em

\item \textbf{Graded derived Koszul duality.}
\[
\mathsf{D}^{b}\!\bigl(
\Lambda^{!}\textup{-}\mathrm{Fp}^{\mathbb{Z}}
\bigr)
\xrightarrow{\ \sim\ }
\mathsf{D}^{b}\!\bigl(
\Lambda^{!}\textup{-}\mathrm{Fcp}^{\mathbb{Z}}
\bigr)
\xrightarrow{\ \sim\ }
\mathsf{D}^{b}\!\bigl(
\Lambda\textup{-}\mathrm{gmod}
\bigr).
\]

\item \textbf{Graded singular Koszul duality.}
\[
\mathsf{D}^{b}\!\left(
\Lambda^{!}\textup{-}\mathrm{Fcp}^{\mathbb{Z}}
\big/
\Lambda^{!}\textup{-}\mathrm{gmod}
\right)
\xrightarrow{\ \sim\ }
\mathsf{D}_{\mathrm{sg}}\!\bigl(
\Lambda\textup{-}\mathrm{gmod}
\bigr).
\]

\item \textbf{Graded BGG correspondence.}
If \(\Lambda\) is Iwanaga--Gorenstein, then
\[
\mathsf{D}^{b}\!\left(
\Lambda^{!}\textup{-}\mathrm{Fp}^{\mathbb{Z}}
\big/
\Lambda^{!}\textup{-}\mathrm{gmod}
\right)
\xrightarrow{\ \sim\ }
\mathsf{D}^{b}\!\left(
\Lambda^{!}\textup{-}\mathrm{Fcp}^{\mathbb{Z}}
\big/
\Lambda^{!}\textup{-}\mathrm{gmod}
\right)
\xrightarrow{\ \sim\ }
\Lambda\textup{-}\underline{\operatorname{Gproj}}^{\mathbb{Z}}.
\]

\item \textbf{Ungraded derived Koszul duality.}
\[
\mathrm{H}^{0}\!\left(
\operatorname{pretr}\!\left(
\mathsf{D}_{\mathrm{dg}}^{b}\!\bigl(
\Lambda^{!}\textup{-}\mathrm{Fcp}^{\mathbb{Z}}
\bigr)
\big/
\langle 1\rangle[1]
\right)
\right)
\xrightarrow{\ \sim\ }
\mathsf{D}^{b}\!\bigl(
\Lambda\textup{-}\mathrm{mod}
\bigr).
\]

\item \textbf{Ungraded singular Koszul duality.}
\[
\mathrm{H}^{0}\!\left(
\operatorname{pretr}\!\left(
\mathsf{D}_{\mathrm{dg}}^{b}\!\left(
\Lambda^{!}\textup{-}\mathrm{Fcp}^{\mathbb{Z}}
\big/
\Lambda^{!}\textup{-}\mathrm{gmod}
\right)
\big/
\langle 1\rangle[1]
\right)
\right)
\xrightarrow{\ \sim\ }
\mathsf{D}_{\mathrm{sg}}\!\bigl(
\Lambda\textup{-}\mathrm{mod}
\bigr).
\]

\item \textbf{Ungraded BGG correspondence.}
If \(\Lambda\) is Iwanaga--Gorenstein, then
\[
\mathrm{H}^{0}\!\left(
\operatorname{pretr}\!\left(
\mathsf{D}_{\mathrm{dg}}^{b}\!\left(
\Lambda^{!}\textup{-}\mathrm{Fcp}^{\mathbb{Z}}
\big/
\Lambda^{!}\textup{-}\mathrm{gmod}
\right)
\big/
\langle 1\rangle[1]
\right)
\right)
\xrightarrow{\ \sim\ }
\Lambda\textup{-}\underline{\operatorname{Gproj}}.
\]

\end{enumerate}

Here
\(\mathrm{H}^{0}\!\bigl(\operatorname{pretr}(-)\bigr)\)
denotes the homotopy category of the pretriangulated hull of the corresponding dg orbit category. For the original construction and proofs of these equivalences, we refer to~\cite{4}.
\section*{Acknowledgments}

Most of the ideas and results developed in this paper originated during the author's visit to the University of South Florida in the spring of 2020. The author would like to thank M. Elhamdadi for his warm hospitality.
\end{corollaryB}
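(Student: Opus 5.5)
The plan is to derive Corollary~B from Theorem~A together with the equivalences of~\cite{4}, by showing that for absolutely Koszul \(\Lambda\) the exact categories appearing there become the abelian categories of finitely copresented and finitely presented graded \(\Lambda^{!}\)-modules.

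\emph{Identifying the coperfect category.} By Theorem~A, \(\Lambda^{!}\) is graded left co-coherent. Co-coherence means the finitely copresented graded modules form an abelian category \(\Lambda^{!}\textup{-}\mathrm{Fcp}^{\mathbb{Z}}\). I would then show that \(\Lambda^{!}\textup{-}\mathrm{Cop}^{\mathbb{Z}}=\Lambda^{!}\textup{-}\mathrm{Fcp}^{\mathbb{Z}}\). One inclusion is clear: a finite injective coresolution by finitely cogenerated injectives gives a finite copresentation. For the converse, take a finitely copresented module and build its minimal injective coresolution, using co-coherence at each step to keep the terms finitely cogenerated. Finiteness of the length must come from the Koszul side. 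Under the realization of~\cite{4}, the cosyzygies correspond to linear parts of minimal resolutions over \(\Lambda\). Finite linearity defect of finite-dimensional modules, which is absolute Koszulity, forces the coresolution to stop; this is the content of the proof of Theorem~A, which I would quote.

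\emph{Items (1) and (2).} Substituting this identification into the derived and singular equivalences \(\mathfrak{F},\mathfrak{G}\) of~\cite{4} gives the second arrow in (1) and the whole of (2). For the first arrow in (1), graded duality \(D=\mathrm{Hom}_k(-,k)\) interchanges \(\Lambda^{!}\)-modules and \((\Lambda^{!})^{\mathrm{op}}\)-modules, so it does not directly give \(\mathrm{Fp}\simeq\mathrm{Fcp}\) over the same ring. I would instead apply Theorem~A to \(\Lambda^{\mathrm{op}}\), which is again absolutely Koszul; this is the point needing a left/right check. That yields coherence of \(\Lambda^{!}\) on the appropriate side. Then I would compose graded Koszul duality for \(\Lambda^{\mathrm{op}}\) with \(D\) to get \(\mathsf{D}^{b}(\Lambda^{!}\textup{-}\mathrm{Fp}^{\mathbb{Z}})\simeq \mathsf{D}^{b}(\Lambda\textup{-}\mathrm{gmod})\), and hence the first arrow. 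The gmod subcategory is preserved, being the finite-length objects on both sides.

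\emph{Item (3).} Here \(\Lambda\) is Iwanaga--Gorenstein, so Buchweitz's theorem identifies \(\mathsf{D}_{\mathrm{sg}}(\Lambda\textup{-}\mathrm{gmod})\) with \(\Lambda\textup{-}\underline{\operatorname{Gproj}}^{\mathbb{Z}}\). Combining this with (2) gives the second arrow. The first arrow follows as in (1), since the equivalence \(\mathrm{Fp}\simeq\mathrm{Fcp}\) at the derived level restricts to the thick subcategories generated by \(\Lambda^{!}\textup{-}\mathrm{gmod}\). Passing to Verdier quotients then identifies the derived categories of the abelian quotients. That identification is exactly how the singular statement in~\cite{4} is proved.

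\emph{Items (4)--(6).} These come from the ungraded versions in~\cite{4}. There \(\mathsf{D}^{b}(\Lambda\textup{-}\mathrm{mod})\) is realized as the triangulated hull of the dg orbit category of \(\mathsf{D}^{b}_{\mathrm{dg}}(\Lambda\textup{-}\mathrm{gmod})\) by \(\langle1\rangle\), transported along \(\mathfrak{G}\), under which \(\langle 1\rangle\) corresponds to \(\langle1\rangle[1]\). It suffices to replace \(\mathrm{Cop}^{\mathbb{Z}}\) by \(\mathrm{Fcp}^{\mathbb{Z}}\) using the identification above, which is compatible with dg enhancements because it is an equality of exact subcategories. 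For (6), I would also use the ungraded Buchweitz equivalence.

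The main obstacle is the equality \(\mathrm{Cop}^{\mathbb{Z}}=\mathrm{Fcp}^{\mathbb{Z}}\), namely the finiteness of injective coresolutions. I would extract it directly from the linear-part description underlying Theorem~A rather than from co-coherence alone.
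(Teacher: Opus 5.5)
For everything involving $\Lambda^{!}\textup{-}\mathrm{Fcp}^{\mathbb{Z}}$ your route is the paper's route. That covers items (2), (4)--(6) and the second arrows in (1) and (3). The proof of Theorem~A shows that every finitely copresented graded module is coperfect, so $\Lambda^{!}\textup{-}\mathrm{Cop}^{\mathbb{Z}}=\Lambda^{!}\textup{-}\mathrm{Fcp}^{\mathbb{Z}}$ with the same exact structure. One then substitutes this into the equivalences recalled from~\cite{4}, using Buchweitz in the Gorenstein cases. That substitution is all the paper does. Its restatement of the corollary in Section~3 lists only the $\mathrm{Fcp}$ equivalences and gives no argument for the $\mathrm{Fp}$ arrows. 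Your extra argument for those arrows has two genuine gaps.

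First, the left/right step is not a routine check. We have $(\Lambda^{\mathrm{op}})^{!}\cong(\Lambda^{!})^{\mathrm{op}}$, and $D$ exchanges finitely copresented left modules with finitely presented right locally finite modules. Combining these with Theorem~A for $\Lambda^{\mathrm{op}}$, one sees that $\Lambda^{\mathrm{op}}$ is absolutely Koszul if and only if $\Lambda^{!}$ is graded \emph{left coherent}. The hypothesis only gives that $\Lambda^{!}$ is graded left co-coherent, i.e.\ graded \emph{right} coherent. You need the left-hand statement for two things. It makes $\Lambda^{!}\textup{-}\mathrm{Fp}^{\mathbb{Z}}$ abelian, which is required to form the quotient in (3). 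It also gives $\Lambda^{!}\textup{-}\mathrm{Pe}^{\mathbb{Z}}=\Lambda^{!}\textup{-}\mathrm{Fp}^{\mathbb{Z}}$, and $\mathrm{Pe}$ is what $D\circ\mathfrak{F}_{\Lambda^{\mathrm{op}}}\circ D$ actually lands on. Passing from right to left coherence of $\Lambda^{!}$ is the real content of the first arrow in (1). Nothing in your proposal, or in the hypotheses, supplies it.

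Second, you claim the composite $\mathsf{D}^{b}(\Lambda^{!}\textup{-}\mathrm{Fp}^{\mathbb{Z}})\simeq\mathsf{D}^{b}(\Lambda^{!}\textup{-}\mathrm{Fcp}^{\mathbb{Z}})$ preserves $\Lambda^{!}\textup{-}\mathrm{gmod}$ ``because these are the finite-length objects on both sides''. This is false.
\begin{itemize}
\item Your functor $\Phi=D\circ\mathfrak{F}_{\Lambda^{\mathrm{op}}}\circ D$ sends $S^{!}_{x}$ to a shift of the injective $I_{x}$, while $\mathfrak{F}$ sends $S^{!}_{x}$ to a shift of $P_{x}$.
\item Hence the thick closures of $\Lambda^{!}\textup{-}\mathrm{gmod}$ correspond to $\mathsf{K}^{b}(\Lambda\textup{-}\mathrm{inj}^{\mathbb{Z}})$ and to $\mathsf{K}^{b}(\Lambda\textup{-}\mathrm{proj}^{\mathbb{Z}})$ respectively.
\item These two subcategories coincide in $\mathsf{D}^{b}(\Lambda\textup{-}\mathrm{gmod})$ exactly when $\Lambda$ is Iwanaga--Gorenstein.
\end{itemize}
A concrete counterexample is $\Lambda=k\langle x,y\rangle/(x,y)^{2}$. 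Here $\Lambda\cong\Lambda^{\mathrm{op}}$ and $\Lambda^{!}=k\langle x,y\rangle$ is coherent on both sides, so your $\Phi$ exists. $\Lambda$ is radical square zero, hence absolutely Koszul, but not Gorenstein. The simple $\Lambda^{!}$-module is sent, up to shift, to $\mathfrak{G}(D\Lambda)$. Since $\operatorname{pd}D\Lambda=\infty$, this complex has infinite-dimensional cohomology.

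So in (3) the first arrow must use the Gorenstein hypothesis. The equality $\mathsf{K}^{b}(\mathrm{proj})=\mathsf{K}^{b}(\mathrm{inj})$ lets the equivalence restrict to the thick subcategories. Verdier quotients, together with $\mathsf{D}^{b}(\mathcal{A})/\mathsf{D}^{b}_{\mathcal{B}}(\mathcal{A})\simeq\mathsf{D}^{b}(\mathcal{A}/\mathcal{B})$ for a Serre subcategory $\mathcal{B}$, then finish the argument. As written, you use the Gorenstein assumption only through Buchweitz. Your argument would therefore prove the first arrow of (3) without it, which shows that this step is wrong.
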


\section{Preliminaries and Notation}
Throughout this paper, unless otherwise specified, $\Lambda$ denotes a finite-dimensional Koszul algebra of the form $kQ/I$, and $\Lambda^{!}$ denotes its Koszul dual. We regard both $\Lambda$ and $\Lambda^{!}$ as $k$-linear categories, or as $k$-linear graded categories when the grading is relevant. Accordingly, modules over $\Lambda$ and $\Lambda^{!}$ are viewed as $k$-linear functors.

\medskip

Since \(\Lambda\) is finite-dimensional, we work primarily with finite-dimensional modules. A \emph{left \(\Lambda\)-module} is a covariant \(k\)-linear functor
\[
M \colon \Lambda \longrightarrow k\textup{-mod},
\]
where \(k\textup{-mod}\) denotes the category of finite-dimensional \(k\)-vector spaces. Explicitly, such a functor assigns to each vertex \(x \in Q_0\) a vector space \(M(x)\), and to each arrow \(\alpha\colon x\to y\) a linear map
\[
M(\alpha)\colon M(x)\longrightarrow M(y),
\]
compatible with composition. Morphisms are given by natural transformations.

\medskip

We endow \(\Lambda\) with the grading induced by path length. A \emph{graded left \(\Lambda\)-module} is a covariant \(k\)-linear functor
\[
M \colon \Lambda \longrightarrow k\textup{-gmod},
\]
where \(k\textup{-gmod}\) denotes the category of finite-dimensional \(\mathbb{Z}\)-graded vector spaces. Thus
\[
M(x)=\bigoplus_{i\in\mathbb{Z}} N_i(x),
\]
and each morphism \(M(\alpha)\) is homogeneous of degree \(1\).

\medskip

Dually, the Koszul dual \(\Lambda^{!}\) is, in general, only locally finite-dimensional. Accordingly, we consider \emph{locally finite-dimensional graded \(\Lambda^{!}\)-modules}, that is, covariant \(k\)-linear functors
\[
M \colon \Lambda^{!} \longrightarrow k\textup{-GMod},
\]
where \(k\textup{-GMod}\) denotes the category of \(\mathbb{Z}\)-graded vector spaces such that each graded component is finite-dimensional.

\medskip

A graded module (over either \(\Lambda\) or \(\Lambda^{!}\)) is said to be
\begin{itemize}
\item \emph{left bounded} if it vanishes in sufficiently negative degrees,
\item \emph{right bounded} if it vanishes in sufficiently positive degrees,
\item \emph{bounded} if it is both left and right bounded.
\end{itemize}

\medskip

We denote by \(\Lambda\textup{-gmod}\) the category of finite-dimensional graded \(\Lambda\)-modules. For \(\Lambda^{!}\), we denote by \(\Lambda^{!}\textup{-GMod}\) the category of locally finite-dimensional graded modules, and by \(\Lambda^{!}\textup{-GMod}^{+}\) (resp.\ \(\Lambda^{!}\textup{-GMod}^{-}\)) the full subcategory of left bounded (resp.\ right bounded) modules.

\medskip

All these categories are naturally endowed with the grading shift functors
\[
\langle i\rangle \colon M \longmapsto M\langle i\rangle,
\qquad i \in \mathbb{Z},
\]
defined by reindexing the grading according to
\[
\bigl(M\langle i\rangle\bigr)_{n} = M_{n-i}.
\]

\medskip

For each vertex \(x\in Q_{0}\), the indecomposable graded projective \(\Lambda\)-module is given by
\[
P_{x}(y)=\Lambda(x,y),
\]
with grading induced by path length. The action of an arrow \(\alpha\colon a\to b\) is given by right multiplication,
\[
P_{x}(\alpha)(p)=p\alpha.
\]
We denote by \(\Lambda\textup{-proj}^{\mathbb{Z}}\) the category of finite-dimensional graded projective \(\Lambda\)-modules.

\medskip

Dually, for each vertex \(x\in Q_{0}\), the indecomposable graded injective \(\Lambda\)-module is given by
\[
I_{x}(y)=D\Lambda(y,x),
\qquad D=\operatorname{Hom}_{k}(-,k),
\]
with structure maps induced by precomposition,
\[
I_{x}(\alpha)(f)(p)=f(p\alpha).
\]
We denote by \(\Lambda\textup{-inj}^{\mathbb{Z}}\) the category of finite-dimensional graded injective \(\Lambda\)-modules.

\medskip

\medskip

Analogous constructions are carried out for the Koszul dual algebra \(\Lambda^{!}\), where one replaces finite-dimensional modules by locally finite-dimensional graded modules. In particular, we denote by
\( 
\Lambda^{!}\textup{-Proj}^{\mathbb{Z}}
\)
and \( 
\Lambda^{!}\textup{-Inj}^{\mathbb{Z}}
\)
the full subcategories of \(\Lambda^{!}\textup{-GMod}\) consisting of finite direct sums of grading shifts of indecomposable projective and injective modules, respectively.

\medskip

We write \(\Lambda^{!}\textup{-Fp}^{\mathbb{Z}}\) for the category of finitely presented graded \(\Lambda^{!}\)-modules, that is, those modules \(M\) fitting into an exact sequence
\[
P^{-1} \longrightarrow P^{0} \longrightarrow M \longrightarrow 0,
\]
with \(P^{-1}\) and \(P^{0}\) finite direct sums of objects in \(\Lambda^{!}\textup{-Proj}^{\mathbb{Z}}\).

Dually, we denote by \(\Lambda^{!}\textup{-Fcp}^{\mathbb{Z}}\) the category of finitely copresented graded \(\Lambda^{!}\)-modules, namely those modules \(N\) admitting an exact sequence
\[
0 \longrightarrow N \longrightarrow I^{0} \longrightarrow I^{1},
\]
where \(I^{0}\) and \(I^{1}\) belong to \(\Lambda^{!}\textup{-Inj}^{\mathbb{Z}}\).

\medskip

The algebra \(\Lambda^{!}\) is called \emph{left coherent} (respectively,
\emph{left co-coherent}) if the category
\(\Lambda^{!}\textup{-}\mathrm{Fp}^{\mathbb{Z}}\)
(respectively,
\(\Lambda^{!}\textup{-}\mathrm{Fcp}^{\mathbb{Z}}\))
is abelian.

Under the left coherence assumption, one may form the quotient category
\[
\Lambda^{!}\textup{-}\mathrm{Fp}^{\mathbb{Z}}
\big/
\Lambda^{!}\textup{-}\mathrm{gmod},
\]
usually referred to as the \emph{tails category}. It plays the role of the
category of coherent sheaves on a noncommutative projective space; see
~\cite{2,22,23}.

In the dual setting, if \(\Lambda^{!}\) is left co-coherent, one obtains the \emph{cotails category}
\[
\Lambda^{!}\textup{-Fcp}^{\mathbb{Z}} \big/ \Lambda^{!}\textup{-gmod}.
\]

\medskip

Even in the absence of coherence, the exact categories
\[
\Lambda^{!}\textup{-Cop}^{\mathbb{Z}} \big/ \Lambda^{!}\textup{-gmod}
\quad \text{and} \quad
\Lambda^{!}\textup{-Pe}^{\mathbb{Z}} \big/ \Lambda^{!}\textup{-gmod}
\]
remain well-defined and play a crucial role in the formulation of graded singular Koszul duality and variants of the Bernstein–Gelfand–Gelfand correspondence; see~\cite{4}.

\medskip

We denote by \(\Lambda^{!}\textup{-Pe}^{\mathbb{Z}}\) the category of \emph{perfect} graded \(\Lambda^{!}\)-modules, i.e., those admitting a finite projective resolution
\[
0 \longrightarrow P^{-n} \longrightarrow \cdots \longrightarrow P^{-1} \longrightarrow P^{0} \longrightarrow M \longrightarrow 0,
\]
with each \(P^{-i}\) a finite direct sum of objects from \(\Lambda^{!}\textup{-Proj}^{\mathbb{Z}}\).

Dually, the category \(\Lambda^{!}\textup{-Cop}^{\mathbb{Z}}\) consists of \emph{coperfect} graded modules, namely those modules admitting a finite injective coresolution
\[
0 \longrightarrow N \longrightarrow I^{0} \longrightarrow I^{1} \longrightarrow \cdots \longrightarrow I^{n} \longrightarrow 0,
\]
with each \(I^{i}\) in \(\Lambda^{!}\textup{-Inj}^{\mathbb{Z}}\).

\medskip

We recall that $\Lambda$ is Koszul if, for each vertex $x \in Q_{0}$, the simple module $S_{x}$ admits a linear graded projective resolution
\[
\cdots \longrightarrow P^{-2} \longrightarrow P^{-1} \longrightarrow P^{0} \longrightarrow S_{x} \longrightarrow 0,
\]
that is, each term $P^{-i}$ is generated in degree $-i$. Equivalently, one has
\[
P^{-i} \cong \bigoplus_{y \in Q_{0}} P_{y}\langle -i\rangle.
\]
Such a resolution is called \emph{linear}.

\medskip

More generally, one defines the notions of Koszul and coKoszul modules. A graded $\Lambda$-module (respectively, a graded $\Lambda^{!}$-module) $M$ is said to be \emph{Koszul} (respectively, \emph{coKoszul}) if it is generated in degree zero and admits a finite linear projective resolution (respectively, is cogenerated in degree zero and admits a finite colinear injective coresolution).

These conditions admit a homological characterization. A graded $\Lambda$-module $M$ is Koszul if
\[
\operatorname{Ext}^n_{\Lambda\textup{-gmod}}\bigl(M, S_x\langle -i\rangle\bigr)=0
\quad \text{for all } i \neq n \text{ and all } x \in Q_0,
\]
while a graded $\Lambda^{!}$-module $M$ is coKoszul if
\[
\operatorname{Ext}^n_{\Lambda^{!}\textup{-GMod}}\bigl(S^{!}_x\langle i\rangle, M\bigr)=0
\quad \text{for all } i \neq n \text{ and all } x \in Q_0.
\]

A graded module $M$ is said to be \emph{linear} (respectively, \emph{colinear}) if there exists an integer $i$ such that $M\langle -i\rangle$ is Koszul (respectively, coKoszul).

\medskip

The Koszul dual algebra $\Lambda^{!}$ may be described as follows. Let $V=(kQ)_2$ be the $k$-vector space spanned by all paths of length $2$ in $Q$, and set $I_2 = I \cap V$. Consider the opposite quiver $Q^{\mathrm{op}}$, and let $V^{\mathrm{op}} = (kQ^{\mathrm{op}})_2$. Choosing a basis of $V$ induces a dual basis of $V^{\mathrm{op}}$, and hence a nondegenerate bilinear pairing
\[
\langle -,- \rangle \colon V \times V^{\mathrm{op}} \longrightarrow k.
\]
Let $I_2^{\perp} \subset V^{\mathrm{op}}$ denote the orthogonal complement of $I_2$ with respect to this pairing. The quadratic dual algebra is then defined by
\[
\Lambda^{!} = kQ^{\mathrm{op}} \big/ \langle I_2^{\perp} \rangle.
\]

In general, even when $\Lambda$ is finite-dimensional, its Koszul dual $\Lambda^{!}$ is only locally finite-dimensional and need not be finite-dimensional. However, $\Lambda^{!}$ is finite-dimensional whenever $\Lambda$ has finite global dimension; see~\cite[Theorem~2.6]{4}. In fact, this condition is equivalent: $\Lambda$ has finite global dimension if and only if $\Lambda^{!}$ is finite-dimensional.

\medskip

We next recall the notion of the linear part of a minimal projective resolution, as well as the concepts of weakly Koszul modules and absolutely Koszul algebras.

Let \(M\) be a finitely presented graded \(\Lambda\)-module, and consider a minimal graded projective resolution
\[
\cdots \longrightarrow P^{-k} \longrightarrow \cdots \longrightarrow P^{-1} \longrightarrow P^{0} \longrightarrow M \longrightarrow 0.
\]
Since \(\Lambda\) is graded, each differential \(d^{i}\colon P^{i}\to P^{i+1}\) admits a decomposition into homogeneous components with respect to the internal grading,
\( 
d^{i}=\sum_{j\ge 1} d^{i}_{j},
\)
where \(d^{i}_{j}\) is homogeneous of degree \(j\).

\medskip

The \emph{linear part} of the resolution \(P^{\bullet}\), denoted \(\mathrm{Lin}^{\Lambda}(P^{\bullet})\), is the complex with the same underlying graded modules as \(P^{\bullet}\), but whose differentials are given by the degree-one components
\( 
d^{i}_{\mathrm{lin}} := d^{i}_{1}.
\)
Equivalently, \(\mathrm{Lin}^{\Lambda}(P^{\bullet})\) is obtained by discarding all higher-degree components of the differentials.

In the case where \(\Lambda\) is given by a quiver with relations, each component \(d^{i}_{j}\) arises from multiplication by paths of length \(j\). In particular, the linear part is induced by multiplication by arrows, and therefore \(\mathrm{Lin}^{\Lambda}(P^{\bullet})\) is again a complex of graded projective modules.

The notion of the linear part of a minimal projective resolution was introduced by Eisenbud, Fl{\o}ystad, and Schreyer in the context of exterior algebras; see~\cite{11}. Although originally formulated for the Koszul dual pair consisting of an exterior algebra and a polynomial algebra, the construction extends to arbitrary Koszul dual pairs. This perspective was further developed by Iwanaga, who generalized the theory to Koszul algebras and established several fundamental connections with Koszul duality; see~\cite{26}.

\medskip

The \emph{linearity defect} of a finitely presented graded \(\Lambda\)-module \(M\) is defined by
,
\[
\mathrm{ld}_{\Lambda}(M)
=
\sup\Bigl\{
i 
\;\Big|\;
H^{i}\bigl(\mathrm{Lin}^{\Lambda}(P^{\bullet})\bigr)\neq 0
\Bigr\}.
\]

\medskip

The \emph{global linearity defect} of \(\Lambda\) is given by
\[
\mathrm{gl\,ld}_{\Lambda}
=
\sup\Bigl\{
\mathrm{ld}_{\Lambda}(M)
\;\Big|\;
M \in \Lambda\textup{-gmod}
\Bigr\}.
\]

\medskip

A graded \(\Lambda\)-module \(M\) is said to be \emph{weakly Koszul} if the linear part \(\mathrm{Lin}^{\Lambda}(P^{\bullet})\) of its minimal graded projective resolution is exact; equivalently, if \(\mathrm{ld}_{\Lambda}(M)=0\).

\medskip

Following~\cite{19,26}, we adopt the terminology \emph{weakly Koszul} in order to distinguish this notion from other usages of the term \emph{Koszul module}, including that of~\cite{14}, where modules satisfying the above condition are referred to as Koszul. A Koszul duality for weakly Koszul modules was established in~\cite[Theorem~3.1]{16} and~\cite[Theorem~4.4]{26}, and continues to hold in the present setting.
\medskip

A Koszul algebra \(\Lambda\) is said to be \emph{absolutely Koszul} if every finite-dimensional \(\Lambda\)-module admits a weakly Koszul syzygy. It was shown in~\cite{5} that quadratic monomial algebras are absolutely Koszul and that their global linearity defect is at most one. Moreover, in the radical square zero case, every finite-dimensional module is weakly Koszul, and hence the global linearity defect vanishes. Exterior algebras also provide a fundamental class of absolutely Koszul algebras; see~\cite{11}.

\medskip

Dually, one defines the notion of a \emph{co-absolutely Koszul} algebra by requiring that every finite-dimensional graded $\Lambda^{!}$-module admits a weakly coKoszul cosyzygy.

\medskip 
We now recall the construction of the graded derived Koszul duality in the finite-dimensional setting; see~\cite{4} for a detailed account.

\medskip

A cochain complex of graded projective \(\Lambda\)-modules
\[
\cdots \longrightarrow P^{n-1} \longrightarrow P^{n} 
\longrightarrow P^{n+1} \longrightarrow \cdots
\]
is said to be \emph{linear} if every indecomposable direct summand of \(P^{n}\) is of the form \(P_{x}\langle n\rangle\) for some \(x \in Q_{0}\). Equivalently, each summand is generated in internal degree \(n\). We denote by
\( 
\mathcal{LC}\!\bigl(\Lambda\textup{-proj}^{\mathbb{Z}}\bigr)
\)
the category of linear complexes of graded projective modules.

By results of Martínez-Villa and Saorín~\cite[Theorem~2.4]{18}, and independently Mazorchuk, Ovsienko, and Stroppel~\cite[Theorem~12]{20}, there is an equivalence of abelian categories
\[
\Lambda^{!}\textup{-GMod}
\xrightarrow{\;\sim\;}
\mathcal{LC}\!\bigl(\Lambda\textup{-proj}^{\mathbb{Z}}\bigr),
\]
which restricts to
\[
\Lambda^{!}\textup{-gmod}
\xrightarrow{\;\sim\;}
\mathcal{LC}^{b}\!\bigl(\Lambda\textup{-proj}^{\mathbb{Z}}\bigr).
\]

This equivalence is realized by the \emph{Koszul functor}
\[
K \colon \Lambda^{!}\textup{-GMod} \longrightarrow
\mathcal{LC}\!\bigl(\Lambda\textup{-proj}^{\mathbb{Z}}\bigr),
\]
defined by
\[
K(M)^{n} = \bigoplus_{x \in Q_{0}} P_{x}\langle n \rangle \otimes_{k} M_{n}(x),
\]
with differential
\[
d^{n} = \sum_{\alpha\colon y \to x}
P_{\alpha} \otimes M(\alpha^{\operatorname{op}}),
\qquad P_{\alpha}(p)=p\alpha.
\]

\medskip

Let \(\Lambda^{!}\textup{-GMod}^{-,b}\) denote the full subcategory of modules \(M\) such that \(K(M)\) is right bounded and has bounded cohomology. Then \(K\) restricts to an equivalence of exact categories
\[
K \colon \Lambda^{!}\textup{-GMod}^{-,b}
\xrightarrow{\;\sim\;}
\mathcal{LC}^{-,b}\!\bigl(\Lambda\textup{-proj}^{\mathbb{Z}}\bigr).
\]

Although \(\Lambda^{!}\textup{-GMod}^{-,b}\) is not abelian in general, it admits a natural exact structure: it is closed under extensions and satisfies the two-out-of-three property for short exact sequences. In particular, it is stable under kernels of admissible epimorphisms and cokernels of admissible monomorphisms. Since \(\Lambda\) is finite-dimensional, one has
\[
\Lambda^{!}\textup{-GMod}^{-,b} = \Lambda^{!}\textup{-Cop}^{\mathbb{Z}},
\qquad \text{see~\cite{4}}.
\]

\medskip

We now pass to derived categories. Let
\[
M^{\bullet} \in \mathsf{C}^{b}\!\bigl(\Lambda^{!}\textup{-GMod}^{-,b}\bigr)
\]
be a bounded cochain complex. Applying \(K\) componentwise yields a double complex
\[
B^{q,p} := K(M^{p})^{q}
= \bigoplus_{x\in Q_{0}} P_{x}\langle q\rangle \otimes M^{p}_{q}(x).
\]

The horizontal differential is induced by \(d^{p}\), while the vertical differential is given by the \(\Lambda^{!}\)-action:
\[
d_{1}^{q,p} =
\bigoplus_{x} \mathrm{id} \otimes (d^{p})_{q,x},
\qquad
d_{2}^{q,p} =
\sum_{\alpha\colon x \to y}
P_{\alpha} \otimes M^{p}(\alpha^{\mathrm{op}}).
\]

The associated total complex is defined by
\[
\mathrm{Tot}(B)^{n} = \bigoplus_{p+q=n} B^{q,p},
\qquad
d_{\mathrm{Tot}} = d_{1} + (-1)^{p} d_{2},
\]
and satisfies \(d_{\mathrm{Tot}}^{2}=0\).

\medskip

This construction defines a functor
\[
\mathcal{F}\colon
\mathsf{C}^{b}\!\bigl(\Lambda^{!}\textup{-GMod}^{-,b}\bigr)
\longrightarrow
\mathsf{C}^{-,b}\!\bigl(\Lambda\textup{-proj}^{\mathbb{Z}}\bigr),
\]
which preserves homotopies. Passing to homotopy categories, we obtain
\[
\mathscr{F}\colon
\mathsf{K}^{b}\!\bigl(\Lambda^{!}\textup{-GMod}^{-,b}\bigr)
\longrightarrow
\mathsf{K}^{-,b}\!\bigl(\Lambda\textup{-proj}^{\mathbb{Z}}\bigr).
\]

Using the canonical equivalence
\[
\mathsf{K}^{-,b}\!\bigl(\Lambda\textup{-proj}^{\mathbb{Z}}\bigr)
\cong
\mathsf{D}^{b}\!\bigl(\Lambda\textup{-gmod}\bigr),
\]
together with the Acyclic Assembly Lemma, the functor \(\mathscr{F}\) descends to a triangulated functor
\[
\mathfrak{F}\colon
\mathsf{D}^{b}\!\bigl(\Lambda^{!}\textup{-Cop}^{\mathbb{Z}}\bigr)
\longrightarrow
\mathsf{D}^{b}\!\bigl(\Lambda\textup{-gmod}\bigr).
\]

\medskip

The quasi-inverse is constructed dually. For \(N \in \Lambda\textup{-gmod}\), define
\[
G(N)^{n}
=
\bigoplus_{x\in Q_{0}} I^{!}_{x}\langle -n\rangle \otimes N_{n}(x),
\qquad
I^{!}_{x} := \mathbb{D}\!\bigl(\Lambda^{!\,\mathrm{op}}(x,-)\bigr).
\]

The differential is induced by the \(\Lambda\)-module structure of \(N\). Extending to complexes yields a functor
\[
\mathcal{G}\colon
\mathsf{C}^{b}\!\bigl(\Lambda\textup{-gmod}\bigr)
\longrightarrow
\mathsf{C}^{b}\!\bigl(\Lambda^{!}\textup{-Inj}^{\mathbb{Z}}\bigr),
\]
which preserves homotopies and induces
\[
\mathfrak{G}\colon
\mathsf{D}^{b}\!\bigl(\Lambda\textup{-gmod}\bigr)
\longrightarrow
\mathsf{D}^{b}\!\bigl(\Lambda^{!}\textup{-Cop}^{\mathbb{Z}}\bigr).
\]

\medskip

The functors \(\mathfrak{F}\) and \(\mathfrak{G}\) are quasi-inverse triangulated equivalences. The functor \(\mathfrak{F}\) is referred to as the \emph{graded derived Koszul duality}; see~\cite{4}.
\begin{remark}
The graded derived Koszul duality constructed in this paper, together with its quasi-inverse, is closely related to the duality developed by Eisenbud, Fl{\o}ystad, and Schreyer in the context of exterior algebras; see~\cite{11}.
\end{remark}
\medskip 
The graded derived Koszul duality functor \(\mathfrak{F}\) and its quasi-inverse \(\mathfrak{G}\) may be viewed as restrictions of a more general equivalence between suitable triangulated subcategories of \(\mathsf{D}(\Lambda^{!}\textup{-GMod})\) and \(\mathsf{D}(\Lambda\textup{-GMod})\), namely
\[
\mathsf{D}^{\downarrow}(\Lambda^{!}\textup{-GMod})
\;\xrightarrow{\ \sim\ }\;
\mathsf{D}^{\uparrow}(\Lambda\textup{-GMod}).
\]
For a detailed construction, including precise definitions and proofs, we refer to~\cite[Theorem~5.7]{7}, \cite[Theorem~30]{20}, and \cite[Theorem~2.12.1]{3}; see also~\cite{8}. This equivalence will be used throughout the paper without further comment.
\medskip

The principal object of study in this paper is the triangulated equivalence
\[
\begin{tikzcd}[column sep=huge]
\mathsf{D}^b\bigl(\Lambda^{!}\textup{-Cop}^{\mathbb{Z}}\bigr)
\arrow[r, bend left=35, "\mathfrak{F}"]
&
\mathsf{D}^b\bigl(\Lambda\textup{-gmod}\bigr)
\arrow[l, bend left=35, "\mathfrak{G}"']
\end{tikzcd}
\]
Since the category \(\Lambda^{!}\textup{-Cop}^{\mathbb{Z}}\) is not abelian in general, we regard
\(\mathsf{D}^b\bigl(\Lambda^{!}\textup{-Cop}^{\mathbb{Z}}\bigr)\) as a full triangulated subcategory of
\(\mathsf{D}^{\downarrow}(\Lambda^{!}\textup{-GMod})\). In particular, we shall freely consider the cohomology objects of complexes in
\(\mathsf{D}^b\bigl(\Lambda^{!}\textup{-Cop}^{\mathbb{Z}}\bigr)\).
\begin{center}
\section{Main Results}
\end{center}
In this section, we prove the main results announced in the introduction. We begin by describing the linear part of minimal projective resolutions via graded derived Koszul duality and use this description to characterize finite-dimensional absolutely Koszul algebras in terms of their Koszul duals. We then derive a necessary and sufficient condition for the finiteness of the graded finitistic dimension of a finite-dimensional Koszul algebra. As an application, we show that absolutely Koszul algebras of finite global linearity defect have finite graded finitistic dimension. We next use these results to refine the Koszul dualities established in~\cite{4}. Finally, we answer a question raised by Green et al.~\cite{12} concerning when the category of modules with linear presentations coincides with the category of Koszul modules.

\medskip 

The following result is well known; see, for instance,~\cite{3,4,7,20}.

\begin{proposition}
The graded derived Koszul duality functor $\mathfrak{F}$ sends each indecomposable injective module $I^{!}_{x}$ to a minimal projective resolution of the corresponding simple module $S_{x}$, and sends simple modules to projective modules.

Dually, its quasi-inverse $\mathfrak{G}$ sends each indecomposable projective module $P_{x}$ to a minimal injective coresolution of the corresponding simple module $S^{!}_{x}$, and sends simple modules to injective modules.
\end{proposition}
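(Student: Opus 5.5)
Computing $\mathfrak{F}$ and $\mathfrak{G}$ directly on these specific objects, using the explicit descriptions of the Koszul functor $K$ and of $G$ recalled above, will give the proposition. In each case the object is a single module concentrated in cohomological degree $0$. The total complex $\mathrm{Tot}(B)$ then reduces to $K(M)$ itself, so $\mathfrak{F}(M)$ is represented by the linear complex $K(M)$. Likewise $\mathfrak{G}(N)$ is represented by the complex $G(N)$ of injectives.

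\emph{Simple modules.} I start with $S^{!}_x$, which is one-dimensional and concentrated at vertex $x$ in internal degree $0$. The formula $K(M)^{n}=\bigoplus_{y}P_{y}\langle n\rangle\otimes M_{n}(y)$ gives $K(S^{!}_x)^{0}=P_x$ and zero in all other degrees. The differential vanishes, so $\mathfrak{F}(S^{!}_x)\cong P_x$. In the same way, $G(S_x)^{n}=\bigoplus_y I^{!}_y\langle -n\rangle\otimes (S_x)_n(y)$ is $I^{!}_x$ in degree $0$ and zero elsewhere, so $\mathfrak{G}(S_x)\cong I^{!}_x$. Grading shifts are carried along by the obvious compatibility $K(M\langle i\rangle)\cong K(M)\langle\pm i\rangle[\mp i]$.

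\emph{Injectives and projectives.} Next I compute $K(I^{!}_x)$. We have $(I^{!}_x)_{n}(y)=D\Lambda^{!}(\ldots)$, which is dual to the paths of length $-n$ in $\Lambda^{!}$ ending at $x$; in particular it is nonzero only for $n\le 0$. So $K(I^{!}_x)^{-i}=\bigoplus_y P_y\langle -i\rangle\otimes D\Lambda^{!}_i(y,x)$, and the differential is given by the arrow action. This is exactly the Koszul complex $\Lambda\otimes_{\Lambda_0}(\Lambda^{!}_i)^{*}$ resolving $\Lambda_0$, cut down to the vertex $x$. By the Koszulity of $\Lambda$ (the linear resolution of $S_x$ recalled in the Preliminaries), this complex is acyclic except in degree $0$, where its cohomology is $S_x$. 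Here I use the standard identification $\Lambda^{!}_i\cong \operatorname{Ext}^i(\Lambda_0,\Lambda_0)$, or equivalently $(\Lambda^{!}_i)^{*}\cong \bigcap_{j} V^{\otimes j}\otimes I_2\otimes V^{\otimes i-j-2}$. The resolution is linear, so every differential has entries in the radical, and hence it is minimal. For the dual statement I compute $G(P_x)^{n}=\bigoplus_y I^{!}_y\langle -n\rangle\otimes \Lambda_n(x,y)$ for $n\ge 0$. This is the colinear Koszul coresolution of $S^{!}_x$, and it is exact because $\Lambda^{!}$ is Koszul (Koszul duality is symmetric). Minimality again follows from colinearity.

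The main obstacle is the matching in the previous step: I need the differential of $K(I^{!}_x)$, which is induced by $P_\alpha\otimes I^{!}_x(\alpha^{\mathrm{op}})$, to agree, up to the conventions on $Q^{\mathrm{op}}$ and the pairing $\langle-,-\rangle$, with the Koszul complex differential. Only then does acyclicity follow from Koszulity. I would check this by writing $\bigl(D\Lambda^{!}_i\bigr)(y,x)$ as the subspace $\bigcap_j V^{\otimes j}\otimes I_2\otimes V^{\otimes i-j-2}$ of $(kQ)_i$ via the perpendicular description of $\Lambda^{!}$. After that identification the differential becomes ``split off the last arrow and multiply,'' which is the standard Koszul differential. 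Alternatively, I can avoid this bookkeeping: $\mathfrak{F}$ is an equivalence with quasi-inverse $\mathfrak{G}$, and $\mathfrak{G}(S_x)\cong I^{!}_x$ by the easy computation above, so $\mathfrak{F}(I^{!}_x)\cong S_x$ in $\mathsf{D}^b(\Lambda\textup{-gmod})$. The representative $K(I^{!}_x)$ is then a linear complex of projectives quasi-isomorphic to $S_x$, hence a minimal projective resolution. The dual case follows symmetrically from $\mathfrak{F}(S^{!}_x)\cong P_x$.
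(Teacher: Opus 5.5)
Your proof is correct. The paper gives no argument of its own for this proposition: it calls it well known and refers to \cite{3,4,7,20}. Your proposal therefore supplies what the text leaves to the literature.

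Your first route identifies $K(I^{!}_{x})$, via $D\Lambda^{!}_{i}\cong\bigcap_{j}V^{\otimes j}\otimes I_{2}\otimes V^{\otimes(i-j-2)}$, with the vertex-$x$ part of the Koszul complex of $\Lambda$. It identifies $G(P_{x})$ with the $k$-dual of the Koszul complex of $\Lambda^{!}$, using $\Lambda^{!!}\cong\Lambda$ and Koszulity of $\Lambda^{!}$. This is essentially the argument of those sources. One point to state precisely: exactness comes from the theorem that $\Lambda$ is Koszul if and only if its Koszul complex resolves $\Lambda_{0}$. It does not come from $\Lambda^{!}_{i}\cong\operatorname{Ext}^{i}(\Lambda_{0},\Lambda_{0})$, which only matches the terms with those of the minimal resolution.

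Your second route is more economical. The stalk computations $\mathfrak{F}(S^{!}_{x})\cong P_{x}$ and $\mathfrak{G}(S_{x})\cong I^{!}_{x}$, together with $\mathfrak{F}\mathfrak{G}\cong\mathrm{id}$ and $\mathfrak{G}\mathfrak{F}\cong\mathrm{id}$, force $K(I^{!}_{x})$ and $G(P_{x})$ to be minimal (co)resolutions. Here $K(I^{!}_{x})$ is linear and sits in cohomological degrees $\le 0$, while $G(P_{x})$ is colinear, bounded, and sits in degrees $\ge 0$. No bookkeeping of conventions is needed. The price is that this route presupposes the equivalence recalled from \cite{4}. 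Proofs of that equivalence, as in \cite{3,20}, ultimately rely on the acyclicity of the Koszul complex, which is essentially this computation. That is legitimate inside the present paper, where the equivalence is taken as known, but only your direct route is independent of it.

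Finally, your $\pm$ is determined. Since $K(M\langle i\rangle)^{n}=K(M)^{n-i}\langle i\rangle$, you get $K(M\langle i\rangle)\cong K(M)\langle i\rangle[-i]$, as in Lemma~3.2. For shifted simples the claim therefore reads $\mathfrak{F}(S^{!}_{x}\langle i\rangle)\cong P_{x}\langle i\rangle[-i]$, which is a projective only up to cohomological shift.
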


Before recalling an explicit description of the linear part of a minimal projective resolution, we require the following auxiliary results concerning Koszul duality. The first lemma is taken from~\cite[Lemma~4.2]{4}. We note that the grading shift used in~\cite{4} is opposite to the convention adopted in the present paper.

\begin{lemma}
Let \(X^{\bullet} \in \mathsf{D}^{b}\!\bigl(\Lambda^{!}\textup{-Cop}^{\mathbb{Z}}\bigr)\). Then, for every \(i \in \mathbb{Z}\), there is a natural isomorphism
\[
\mathfrak{F}\bigl(X^{\bullet}\langle i\rangle\bigr)
\;\cong\;
\mathfrak{F}(X^{\bullet})\langle i\rangle [-i].
\]
\end{lemma}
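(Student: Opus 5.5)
The plan is to verify the isomorphism first at the level of the Koszul functor \(K\) on a single module, then extend it to bounded complexes through the double-complex construction of \(\mathcal{F}\), and finally descend to the derived category. Everything is explicit, so the proof amounts to matching indices and checking signs.

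\emph{Step 1: a single module.} Let \(M\in\Lambda^{!}\textup{-GMod}^{-,b}\). Since \((M\langle i\rangle)_n=M_{n-i}\), the definition of \(K\) gives
\[
K(M\langle i\rangle)^n=\bigoplus_{x\in Q_0}P_x\langle n\rangle\otimes_k M_{n-i}(x).
\]
Substituting \(m=n-i\), this equals \(\bigl(\bigoplus_x P_x\langle m\rangle\otimes M_m(x)\bigr)\langle i\rangle=K(M)^{n-i}\langle i\rangle\). The right-hand side is the degree-\(n\) term of \(K(M)\langle i\rangle[-i]\). The differentials on both sides are \(\sum_\alpha P_\alpha\otimes M(\alpha^{\mathrm{op}})\) and correspond under this identification. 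The only discrepancy is the sign \((-1)^i\) introduced by the shift \([-i]\) on the differential. I will absorb it with the isomorphism given by \(\epsilon_n\cdot\mathrm{id}\) in degree \(n\), for a choice of signs \(\epsilon_n=\pm1\) such as \(\epsilon_n=(-1)^{in}\) or the constant convention, whichever the shift convention requires. This gives a natural isomorphism of linear complexes \(K(M\langle i\rangle)\cong K(M)\langle i\rangle[-i]\).

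\emph{Step 2: bounded complexes.} For \(X^\bullet\in\mathsf{C}^b(\Lambda^{!}\textup{-GMod}^{-,b})\), the double complex of \(X^\bullet\langle i\rangle\) satisfies \(B'^{q,p}=K(X^p\langle i\rangle)^q\cong K(X^p)^{q-i}\langle i\rangle=B^{q-i,p}\langle i\rangle\). So it is the double complex \(B\), shifted by \(\langle i\rangle\) and reindexed by \(i\) in the \(q\)-direction (the \(K\)-direction). Under this identification:
\begin{itemize}
\item the horizontal differential \(d_1\), induced by the \(d^p\), matches directly, since Step 1 is natural in \(M\);
\item the vertical differential carries the sign from Step 1 together with the sign \((-1)^p\) in \(d_{\mathrm{Tot}}\).
\end{itemize}
Taking totals, \(\mathrm{Tot}(B')^n=\bigoplus_{p+q=n}B^{q-i,p}\langle i\rangle=\mathrm{Tot}(B)^{n-i}\langle i\rangle\). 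I then need a sign function \(\epsilon_{p,q}\) on the summands making the termwise identification a chain isomorphism \(\mathrm{Tot}(B')\cong\mathrm{Tot}(B)\langle i\rangle[-i]\). This sign bookkeeping is the main (if modest) obstacle. I would try \(\epsilon_{p,q}=(-1)^{iq}\) first, and otherwise adjust by a factor \((-1)^{ip}\) to match the sign convention of \([-i]\) on total complexes.

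\emph{Step 3: descent to \(\mathsf{D}^b\).} The isomorphism of Step 2 is natural in \(X^\bullet\) and built from identities up to sign, so it commutes with homotopies. It therefore gives a natural isomorphism of functors \(\mathscr{F}\circ\langle i\rangle\cong\langle i\rangle[-i]\circ\mathscr{F}\) on \(\mathsf{K}^b\). The functors \(\langle i\rangle\) and \([-i]\) preserve quasi-isomorphisms and acyclic complexes on both sides. Hence, by the construction of \(\mathfrak{F}\) via the Acyclic Assembly Lemma and the equivalence \(\mathsf{K}^{-,b}(\Lambda\textup{-proj}^{\mathbb{Z}})\cong\mathsf{D}^b(\Lambda\textup{-gmod})\), the isomorphism descends to \(\mathfrak{F}(X^\bullet\langle i\rangle)\cong\mathfrak{F}(X^\bullet)\langle i\rangle[-i]\), natural in \(X^\bullet\in\mathsf{D}^b(\Lambda^{!}\textup{-Cop}^{\mathbb{Z}})\).
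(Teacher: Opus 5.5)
Your argument is correct, but it does not follow the paper. The paper gives no proof of this lemma: it imports it from \cite[Lemma~4.2]{4}, remarking only that \cite{4} uses the opposite grading-shift convention. You verify it directly from the explicit construction of \(\mathfrak{F}\), in three steps:
\begin{enumerate}
\item reindex \(K(M\langle i\rangle)^n=K(M)^{n-i}\langle i\rangle\);
\item carry this through the double complex \(B^{q,p}=K(X^p)^q\) in the \(q\)-direction;
\item descend along the localization \(\mathsf{K}^b\to\mathsf{D}^b\).
\end{enumerate}

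One correction to your sign bookkeeping. The termwise identification \(B'^{q,p}\cong B^{q-i,p}\langle i\rangle\) already commutes on the nose with \(d_1\), with \(d_2\), and with the factor \((-1)^p\), since \(p\) is unchanged. With the usual convention \(d_{C[k]}=(-1)^k d_C\), the target \(\mathrm{Tot}(B)\langle i\rangle[-i]\) carries an overall factor \((-1)^i\) on its differential. The sign that works is therefore \(\epsilon_{p,q}=(-1)^{i(p+q)}=(-1)^{in}\) on total degree \(n\). The factor \((-1)^{iq}\) alone fails on the horizontal differential when \(i\) is odd, so your fallback adjustment by \((-1)^{ip}\) is needed, not optional.

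What your route gains is transparency about conventions. The appearance of \(\langle i\rangle\), rather than \(\langle -i\rangle\), on the right-hand side is forced by the formula \(K(M)^n=\bigoplus_x P_x\langle n\rangle\otimes M_n(x)\) together with \((M\langle i\rangle)_n=M_{n-i}\). If linear complexes were instead normalized with \(P_x\langle -n\rangle\) in cohomological degree \(n\), the same computation would give \(\langle -i\rangle[-i]\). The paper handles exactly this point only by a verbal translation from \cite{4}, so your self-contained check is worth having.
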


\medskip

The following lemma gives an explicit description of the cohomology on the Koszul dual side in terms of morphisms into indecomposable injectives.

\begin{lemma}
Let
\[
X^{\bullet}\in
\mathsf{D}^{b}\!\bigl(
\Lambda^{!}\textup{-}\mathrm{Cop}^{\mathbb{Z}}
\bigr),
\]
and let \(I_x^{!}\) be the indecomposable graded injective
\(\Lambda^{!}\)-module corresponding to \(x\in Q_0\). Then, for all
\(m,i\in\mathbb Z\), there are natural isomorphisms of \(k\)-vector spaces
\[
\operatorname{Hom}_{\mathsf{D}^{b}
(\Lambda^{!}\textup{-}\mathrm{Cop}^{\mathbb{Z}})}
\bigl(
X^{\bullet},
I_x^{!}\langle m\rangle[-i]
\bigr)
\cong
\operatorname{Hom}_{\Lambda^{!}\textup{-}\mathrm{GMod}}
\bigl(
H^{i}(X^{\bullet}),
I_x^{!}\langle m\rangle
\bigr)
\cong
D\!\left(H^{i}(X^{\bullet})_{m}(x)\right).
\]
\end{lemma}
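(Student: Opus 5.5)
The plan is to prove the two isomorphisms separately. For the second isomorphism, regard $H^{i}(X^{\bullet})$ as a locally finite-dimensional graded $\Lambda^{!}$-module and compute $\operatorname{Hom}_{\Lambda^{!}\textup{-GMod}}(N, I_x^{!}\langle m\rangle)$ for an arbitrary $N\in\Lambda^{!}\textup{-GMod}$. Since $I^{!}_x=\mathbb{D}(\Lambda^{!\,\mathrm{op}}(x,-))$ is the graded dual of a representable functor, the graded Yoneda lemma combined with duality gives
\[
\operatorname{Hom}_{\Lambda^{!}\textup{-GMod}}\bigl(N, I_x^{!}\langle m\rangle\bigr)\cong D\bigl(N_{m}(x)\bigr).
\]
This is the standard adjunction $\operatorname{Hom}(N,\mathbb{D}(-))\cong \mathbb{D}(\text{evaluation})$, restricted to homogeneous maps. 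The only care needed is the sign convention for the shift: with $(M\langle m\rangle)_n=M_{n-m}$, degree-zero maps into $I_x^{!}\langle m\rangle$ pick out the degree-$m$ component, and I will check this on $N=S^{!}_x\langle m\rangle$. Local finite-dimensionality makes $D$ well behaved componentwise. Naturality in $N$ is clear.

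For the first isomorphism, I would use that $I_x^{!}\langle m\rangle$ is injective in $\Lambda^{!}\textup{-GMod}$, where products and direct sums agree degreewise since the modules are locally finite. I will regard $\mathsf{D}^{b}(\Lambda^{!}\textup{-Cop}^{\mathbb{Z}})$ as a full subcategory of $\mathsf{D}^{\downarrow}(\Lambda^{!}\textup{-GMod})$, as stipulated at the end of the preliminaries, and compute the Hom there. Since $I_x^{!}\langle m\rangle[-i]$ is a stalk complex of an injective object, morphisms in the derived category from any complex into it are computed by homotopy classes of chain maps:
\[
\operatorname{Hom}_{\mathsf{D}}(X^{\bullet}, I[-i])\cong \operatorname{Hom}_{\mathsf{K}}(X^{\bullet}, I[-i]) \cong H^{i}\bigl(\operatorname{Hom}^{\bullet}(X^{\bullet}, I)\bigr).
\]
Because $\operatorname{Hom}(-,I)$ is exact, it commutes with cohomology, so the right side equals $\operatorname{Hom}(H^{i}(X^{\bullet}), I)$. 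Naturality follows from the naturality of these identifications.

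The main obstacle is justifying that injectives in $\Lambda^{!}\textup{-GMod}$ (or in the relevant bounded subcategory) compute derived Hom for unbounded-below or bounded complexes inside $\mathsf{D}^{\downarrow}$. First I would check that $X^{\bullet}$ can be taken bounded: objects of $\mathsf{D}^{b}(\mathrm{Cop})$ are bounded complexes of coperfect modules. Then the standard fact applies that $\operatorname{Hom}_{\mathsf{K}}(X, I[-i])\to\operatorname{Hom}_{\mathsf{D}}(X, I[-i])$ is bijective for $I$ injective and $X$ bounded, which holds in any abelian category by induction on the length of $X$. The remaining point is that the embedding into $\mathsf{D}^{\downarrow}(\Lambda^{!}\textup{-GMod})$ is fully faithful, which the paper assumes. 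Alternatively, $I^{!}_x$ is itself injective in the exact category $\mathrm{Cop}^{\mathbb{Z}}$, so the same argument runs there directly.
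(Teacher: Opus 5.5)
Your proposal is correct and follows essentially the paper's route: you identify morphisms into the injective stalk complex $I_x^{!}\langle m\rangle[-i]$ in the derived category with homotopy classes of chain maps. You then pass to $\operatorname{Hom}(H^{i}(X^{\bullet}),I_x^{!}\langle m\rangle)$ using exactness of $\operatorname{Hom}(-,I_x^{!}\langle m\rangle)$, which the paper does by hand via the sequence $0\to H^{i}(X^{\bullet})\to X^{i}/\operatorname{Im}d^{i-1}_X\to\operatorname{Im}d^{i}_X\to 0$, and you finish with graded co-Yoneda. One small indexing slip: with the standard convention $\operatorname{Hom}^{n}(X^{\bullet},I)=\operatorname{Hom}(X^{-n},I)$ for a stalk $I$, the homotopy classes of maps $X^{\bullet}\to I[-i]$ are $H^{-i}$ rather than $H^{i}$ of $\operatorname{Hom}^{\bullet}(X^{\bullet},I)$, and it is $H^{-i}$ that matches $\operatorname{Hom}(H^{i}(X^{\bullet}),I)$.
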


\begin{proof}
Set \(I=I_x^{!}\langle m\rangle\). Since \(I\) is injective, the stalk
complex \(I[-i]\), concentrated in cohomological degree \(i\), is
\(K\)-injective. Hence
\[
\operatorname{Hom}_{\mathsf{D}^{b}
(\Lambda^{!}\textup{-}\mathrm{Cop}^{\mathbb{Z}})}
\bigl(X^\bullet,I[-i]\bigr)
\cong
\operatorname{Hom}_{\mathsf{K}^{b}
(\Lambda^{!}\textup{-}\mathrm{Cop}^{\mathbb{Z}})}
\bigl(X^\bullet,I[-i]\bigr).
\]

A chain map \(X^\bullet\to I[-i]\) is determined by a morphism
\(f:X^i\to I\) satisfying
\[
f\circ d_X^{i-1}=0.
\]
Thus \(f\) factors uniquely through
\(X^i/\operatorname{Im}d_X^{i-1}\), and therefore
\[
Z^0\operatorname{Hom}^{\bullet}(X^\bullet,I[-i])
\cong
\operatorname{Hom}\!\left(
X^i/\operatorname{Im}d_X^{i-1},I
\right).
\]

A chain map is null-homotopic precisely when the corresponding morphism
factors through \(d_X^i:X^i\to X^{i+1}\). Since \(I\) is injective, every
morphism
\[
\operatorname{Im}d_X^i\longrightarrow I
\]
extends to \(X^{i+1}\). Hence the null-homotopic maps identify with the
image of
\[
\operatorname{Hom}(\operatorname{Im}d_X^i,I)
\longrightarrow
\operatorname{Hom}\!\left(
X^i/\operatorname{Im}d_X^{i-1},I
\right).
\]

Now the exact sequence
\[
0\longrightarrow H^i(X^\bullet)
\longrightarrow
X^i/\operatorname{Im}d_X^{i-1}
\longrightarrow
\operatorname{Im}d_X^i
\longrightarrow 0
\]
yields, by injectivity of \(I\),
\[
0\longrightarrow
\operatorname{Hom}(\operatorname{Im}d_X^i,I)
\longrightarrow
\operatorname{Hom}\!\left(
X^i/\operatorname{Im}d_X^{i-1},I
\right)
\longrightarrow
\operatorname{Hom}(H^i(X^\bullet),I)
\longrightarrow0.
\]
Consequently,
\[
\operatorname{Hom}_{\mathsf{D}^{b}
(\Lambda^{!}\textup{-}\mathrm{Cop}^{\mathbb{Z}})}
\bigl(X^\bullet,I_x^{!}\langle m\rangle[-i]\bigr)
\cong
\operatorname{Hom}_{\Lambda^{!}\textup{-}\mathrm{GMod}}
\bigl(H^i(X^\bullet),I_x^{!}\langle m\rangle\bigr).
\]

Finally, graded co-Yoneda and the convention
\((M\langle m\rangle)_n=M_{n-m}\) give
\[
\operatorname{Hom}_{\Lambda^{!}\textup{-}\mathrm{GMod}}
\bigl(H^i(X^\bullet),I_x^{!}\langle m\rangle\bigr)
\cong
D\!\left(H^i(X^\bullet)_m(x)\right).
\]
This proves the result.
\end{proof}

The following proposition exhibits a strong relationship between the minimal projective resolution of a module \(N\) and the cohomology of the associated complex in 
\(\mathsf{D}^{b}\!\bigl(\Lambda^{!}\textup{-Cop}^{\mathbb{Z}}\bigr)\).

\begin{proposition}
Let \(N\in\Lambda\textup{-}\mathrm{gmod}\), and let
\[
X^{\bullet}\in
\mathsf{D}^{b}\!\bigl(
\Lambda^{!}\textup{-}\mathrm{Cop}^{\mathbb{Z}}
\bigr)
\]
be such that
\[
\mathfrak{F}(X^{\bullet})\cong N
\qquad\text{in}\qquad
\mathsf{D}^{b}\!\bigl(
\Lambda\textup{-}\mathrm{gmod}
\bigr).
\]
Then, for every \(x\in Q_0\), every \(m\in\mathbb Z\), and every
\(i\geq0\), there is a natural isomorphism of \(k\)-vector spaces
\[
\operatorname{Ext}^{i}_{\Lambda\textup{-}\mathrm{gmod}}
\bigl(N,S_x\langle m\rangle\bigr)
\cong
D\!\left(
H^{-m-i}(X^{\bullet})_m(x)
\right).
\]
\end{proposition}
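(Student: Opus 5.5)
The plan is to transport the Ext-group to the Koszul dual side through the equivalence $\mathfrak{G}$, and then read it off with the cohomology lemma above. Write
\[
\operatorname{Ext}^{i}_{\Lambda\textup{-gmod}}\bigl(N,S_x\langle m\rangle\bigr)
\cong
\operatorname{Hom}_{\mathsf{D}^{b}(\Lambda\textup{-gmod})}\bigl(N,S_x\langle m\rangle[i]\bigr).
\]
Since $\mathfrak{F}$ is a triangulated equivalence with $\mathfrak{F}(X^\bullet)\cong N$, this group is isomorphic to
\[
\operatorname{Hom}_{\mathsf{D}^{b}(\Lambda^{!}\textup{-Cop}^{\mathbb{Z}})}\bigl(X^\bullet,\mathfrak{G}(S_x\langle m\rangle)[i]\bigr).
\]
So the whole computation reduces to identifying $\mathfrak{G}(S_x\langle m\rangle)$ as a shifted stalk complex of an indecomposable injective.

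\textbf{Computing $\mathfrak{G}(S_x\langle m\rangle)$.} By the first proposition, $\mathfrak{G}(S_x)\cong I^!_x$; this can also be checked directly from the formula for $G$, since $S_x$ is concentrated in degree $0$ at $x$. For the shift I will use the lemma $\mathfrak{F}(X\langle j\rangle)\cong\mathfrak{F}(X)\langle j\rangle[-j]$. Applying it with $j=-m$ gives
\[
\mathfrak{F}\bigl(I^!_x\langle -m\rangle\bigr)\cong S_x\langle -m\rangle[m].
\]
Inverting, I get $\mathfrak{G}(S_x\langle m\rangle)\cong I^!_x\langle -m\rangle[m]$, up to the sign conventions. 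A direct alternative is to read it off the definition of $G$: $G(N)^n=\bigoplus_y I^!_y\langle -n\rangle\otimes N_n(y)$, and $S_x\langle m\rangle$ lives in degree $n=m$. This gives $I^!_x\langle -m\rangle$ placed in cohomological degree $m$, i.e. $I^!_x\langle -m\rangle[-m]$.

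\textbf{Main obstacle.} The hard step is pinning down the exact shift and twist, because the two conventions above disagree in sign. I would settle it by a sanity check against the claimed formula. Take $i=0$ and $N=P_x$: then $\operatorname{Hom}(P_x,S_x)=k$ only for $m=0$, and $\mathfrak{G}(P_x)$ is the injective coresolution of $S^!_x$, which is quasi-isomorphic to $S^!_x$ in degree $0$. The claim then reads $D\bigl(H^{0}(X)_0(x)\bigr)=k$, which is consistent. Next take $N=S_x$, $X=I^!_x$, and $i=m=0$, which is also consistent. To fix the $m$-dependence I would use the linear resolution: $\operatorname{Ext}^i(S_y,S_x\langle m\rangle)$ is nonzero only when $m=-i$. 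The claim then needs $H^{-m-i}(I^!_y)_m=H^0(I^!_y)_{-i}$, and $I^!_y=D\Lambda^{!\mathrm{op}}(y,-)$ is concentrated in nonpositive degrees, so the supports match. Thus the target should be taken as $I^!_x\langle m\rangle[-m]$ in the paper's grading convention, so that the Hom becomes $\operatorname{Hom}\bigl(X^\bullet,I^!_x\langle m\rangle[-m+i]\bigr)$, possibly up to replacing $m$ by $-m$ in the twist exponent.

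\textbf{Conclusion.} With the twist and shift fixed, I apply the cohomology lemma with shift index $-(i-m)$, but in the form matching the target, namely cohomological degree $-m-i$. This gives
\[
\operatorname{Hom}\bigl(H^{-m-i}(X^\bullet),I^!_x\langle m\rangle\bigr)\cong D\bigl(H^{-m-i}(X^\bullet)_m(x)\bigr).
\]
Naturality in $N$, $x$ and $m$ follows because each step is induced by a natural isomorphism: the equivalence $\mathfrak{F}$, the twist lemma, and the cohomology lemma. The only genuine work is the careful bookkeeping of the sign of $m$ in the twist-shift lemma, which I would verify via the Koszul test case $N=S_y$ described above.
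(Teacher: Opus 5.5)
There is a genuine gap. Your outline matches the paper's: transport the Ext group through the equivalence, recognise $S_x\langle m\rangle[i]$ as the image of a shifted twist of $I^{!}_{x}$, and finish with Lemma~3.3. But the one step with actual content is done incorrectly, and the final index is asserted rather than derived.

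\textbf{The inversion error.} Inverting $\mathfrak{F}(I^{!}_{x}\langle -m\rangle)\cong S_x\langle -m\rangle[m]$ gives $\mathfrak{G}(S_x\langle -m\rangle)\cong I^{!}_{x}\langle -m\rangle[-m]$, i.e.\ $\mathfrak{G}(S_x\langle m\rangle)\cong I^{!}_{x}\langle m\rangle[m]$. It does not give $I^{!}_{x}\langle -m\rangle[m]$.

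\textbf{The final index.} The target you finally adopt, $I^{!}_{x}\langle m\rangle[-m]$, turns the group into $\operatorname{Hom}(X^{\bullet},I^{!}_{x}\langle m\rangle[i-m])$. Lemma~3.3 then yields $D\bigl(H^{m-i}(X^{\bullet})_m(x)\bigr)$. Your last paragraph applies the lemma with index $m-i$ but writes down $H^{-m-i}$. These agree only when $m=0$, so the claimed isomorphism does not follow from what you set up.

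\textbf{Your own test case rules out $[-m]$.} Take $N=S_y$, $X^{\bullet}=I^{!}_{y}$ (a stalk in degree $0$) and $m=-i$. Lemma~3.3 then gives $\operatorname{Hom}\bigl(H^{-2i}(I^{!}_{y}),I^{!}_{x}\langle -i\rangle\bigr)=0$ for $i>0$. But $\operatorname{Ext}^{i}(S_y,S_x\langle -i\rangle)$ is in general nonzero. Only the shift $[m]$ places the injective in degree $0$ and produces $D\bigl((I^{!}_{y})_{-i}(x)\bigr)$.

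\textbf{The fix, which is the paper's proof.} No sign hunting is needed.
\begin{enumerate}
\item Apply Lemma~3.2 with twist $m$ (not $-m$) to $\mathfrak{F}(I^{!}_{x})\cong S_x$ from Proposition~3.1. This gives $\mathfrak{F}(I^{!}_{x}\langle m\rangle)\cong S_x\langle m\rangle[-m]$, hence $S_x\langle m\rangle[i]\cong\mathfrak{F}\bigl(I^{!}_{x}\langle m\rangle[m+i]\bigr)$.
\item Full faithfulness turns the Ext group into $\operatorname{Hom}\bigl(X^{\bullet},I^{!}_{x}\langle m\rangle[m+i]\bigr)$.
\item Lemma~3.3 with cohomological index $-(m+i)$ gives $D\bigl(H^{-m-i}(X^{\bullet})_m(x)\bigr)$.
\end{enumerate}

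\textbf{Two cautions.} Do not use the displayed formula for $G$ to arbitrate between sign choices. In this paper's twist convention it yields $\mathfrak{G}(S_x\langle m\rangle)\cong I^{!}_{x}\langle -m\rangle[-m]$, which is incompatible with Lemma~3.2. The paper itself warns, just before Lemma~3.2, that its source uses the opposite twist convention. Also, choosing the shift so that the statement comes out right is a consistency check, not a proof.
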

 \begin{proof}
By the realization of Ext in the bounded derived category,
\[
\operatorname{Ext}^{i}_{\Lambda\textup{-}\mathrm{gmod}}
\bigl(N,S_x\langle m\rangle\bigr)
\cong
\operatorname{Hom}_{\mathsf{D}^{b}
(\Lambda\textup{-}\mathrm{gmod})}
\bigl(N,S_x\langle m\rangle[i]\bigr).
\]
Since
\(\mathfrak{F}(X^{\bullet})\cong N\), we obtain
\[
\operatorname{Ext}^{i}_{\Lambda\textup{-}\mathrm{gmod}}
\bigl(N,S_x\langle m\rangle\bigr)
\cong
\operatorname{Hom}_{\mathsf{D}^{b}
(\Lambda\textup{-}\mathrm{gmod})}
\bigl(
\mathfrak{F}(X^{\bullet}),
S_x\langle m\rangle[i]
\bigr).
\]

By Proposition~3.1 and the compatibility of \(\mathfrak{F}\) with grading
shifts,
\[
\mathfrak{F}\bigl(I_x^{!}\langle m\rangle\bigr)
\cong
S_x\langle m\rangle[-m].
\]
Therefore
\[
S_x\langle m\rangle[i]
\cong
\mathfrak{F}\bigl(I_x^{!}\langle m\rangle\bigr)[i+m].
\]
Since \(\mathfrak{F}\) is fully faithful, it follows that
\[
\begin{aligned}
\operatorname{Hom}_{\mathsf{D}^{b}
(\Lambda\textup{-}\mathrm{gmod})}
\bigl(
\mathfrak{F}(X^{\bullet}),
S_x\langle m\rangle[i]
\bigr)
&\cong
\operatorname{Hom}_{\mathsf{D}^{b}
(\Lambda^{!}\textup{-}\mathrm{Cop}^{\mathbb{Z}})}
\bigl(
X^{\bullet},
I_x^{!}\langle m\rangle[i+m]
\bigr).
\end{aligned}
\]

Applying the preceding lemma with
\[
j=-(i+m),
\]
the latter space is naturally isomorphic to
\[
D\!\left(
H^{-(i+m)}(X^{\bullet})_m(x)
\right).
\]
Hence
\[
\operatorname{Ext}^{i}_{\Lambda\textup{-}\mathrm{gmod}}
\bigl(N,S_x\langle m\rangle\bigr)
\cong
D\!\left(
H^{-m-i}(X^{\bullet})_m(x)
\right),
\]
as claimed.
\end{proof}
The following result clarifies the relationship established in Proposition~3.4. It originates in the work of Eisenbud, Fl{\o}ystad, and Schreyer in the context of exterior algebras; see~\cite[Corollary~3.6]{11}, and extends naturally to finite-dimensional Koszul algebras. Yanagawa~\cite[Theorem~3.9]{26} further generalized this result to arbitrary pairs \((\Lambda,\Lambda^{!})\). It shows that the linear part of a minimal projective resolution of a graded \(\Lambda\)-module is completely determined by cohomological data over the Koszul dual.

\begin{proposition}
Let \(N\) be a finite-dimensional graded \(\Lambda\)-module, and let
\[
P^{\bullet} \longrightarrow N
\]
be its minimal graded projective resolution. Denote by \(\mathrm{Lin}^{\Lambda}(P^{\bullet})\) its linear part.

Then there exists a bounded colinear complex of injective \(\Lambda^{!}\)-modules
\[
I^{\bullet} \in \mathsf{D}^{b}\!\bigl(\Lambda^{!}\textup{-Cop}^{\mathbb{Z}}\bigr)
\]
such that
\[
\mathfrak{F}\!\left(\bigoplus_{i \in \mathbb{Z}} H^{i}(I^{\bullet})[-i]\right)
\;\cong\;
\mathrm{Lin}^{\Lambda}(P^{\bullet})
\]
in \(\mathsf{D}^{b}\!\bigl(\Lambda\textup{-gmod}\bigr)\).
\end{proposition}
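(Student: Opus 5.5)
The natural choice is $I^{\bullet}:=\mathcal{G}(N)$, the complex produced by the quasi-inverse functor. By construction,
\[
\mathcal{G}(N)^{n}=\bigoplus_{x} I^{!}_{x}\langle -n\rangle\otimes N_{n}(x)
\]
is a bounded complex of injectives, since $N$ is finite-dimensional. The $n$-th term is cogenerated in a single internal degree determined by $n$, and the differential is induced by the action of arrows, so it is homogeneous of the linear degree. Hence $I^{\bullet}$ is a bounded colinear complex of injective $\Lambda^{!}$-modules. It lies in $\mathsf{D}^{b}(\Lambda^{!}\textup{-Cop}^{\mathbb{Z}})$ and satisfies $\mathfrak{F}(I^{\bullet})\cong N$. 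The task is then to compare $\mathfrak{F}$ applied to the formal complex $\bigoplus_i H^{i}(I^{\bullet})[-i]$ with $\mathrm{Lin}^{\Lambda}(P^{\bullet})$. I would first check that each $H^{i}(I^{\bullet})$ lies in $\Lambda^{!}\textup{-GMod}^{-,b}$. Failing that, I would view the comparison inside $\mathsf{D}^{\downarrow}(\Lambda^{!}\textup{-GMod})\simeq\mathsf{D}^{\uparrow}(\Lambda\textup{-GMod})$, which the paper allows us to use freely.

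For the main step I would compute $\mathfrak{F}$ of each stalk $H^{i}(I^{\bullet})[-i]$ directly from the double-complex description. For a single module $M$ concentrated in cohomological degree $i$, the total complex is just the shifted linear complex $K(M)[-i]$, with terms $P_x\langle q\rangle\otimes M_q(x)$ in total degree $q+i$. Summing over $i$ gives a complex of graded projectives whose term in each total degree is
\[
\bigoplus_{i+q=n}\ \bigoplus_x P_x\langle q\rangle\otimes H^{i}(I^{\bullet})_q(x).
\]
By Proposition~3.4 with $X^{\bullet}=I^{\bullet}$, the multiplicity $\dim H^{-m-j}(I^\bullet)_m(x)$ equals $\dim\operatorname{Ext}^{j}(N,S_x\langle m\rangle)$. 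Since $P^{\bullet}$ is minimal, this is exactly the multiplicity of $P_x\langle m\rangle$ in $P^{-j}$. So, after reindexing with $q=m$ and $i=-m-j$, the underlying graded modules agree termwise with those of $P^{\bullet}$, and therefore with those of $\mathrm{Lin}^{\Lambda}(P^{\bullet})$.

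It remains to match the differentials. The differential in $\bigoplus_i K(H^{i})[-i]$ has only the vertical, arrow-induced components $P_\alpha\otimes H^i(\alpha^{\mathrm{op}})$, because there are no horizontal maps between different stalks. I would compare this with the actual resolution. Take the full total complex $\mathcal{F}(I^{\bullet})$, which is quasi-isomorphic to $N$ but not minimal, and filter it by the internal $\Lambda^{!}$-degree. The associated spectral sequence, or equivalently a Gaussian-elimination/homological-perturbation reduction that cancels the contractible summands coming from the horizontal differential, yields the minimal resolution $P^{\bullet}$ with terms $\bigoplus K(H^i)[-i]$. After this reduction, the degree-one part of the differential is precisely the induced $\Lambda^{!}$-action on cohomology, since the horizontal homotopies contribute only terms of path length at least two. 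This identifies $\mathrm{Lin}^{\Lambda}(P^{\bullet})$ with $\bigoplus_i K(H^{i}(I^{\bullet}))[-i]=\mathfrak{F}\bigl(\bigoplus_i H^{i}(I^{\bullet})[-i]\bigr)$.

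I expect this perturbation step to be the main obstacle: showing that the higher transferred differentials have internal degree at least two, so that only the cohomology module structure survives in the linear part. I would argue this by tracking internal degrees. The horizontal differential $d_1$ has $\Lambda$-degree $0$ and the vertical differential $d_2$ has degree $1$. Each correction term in the transferred differential is a composite containing at least two factors of $d_2$ separated by homotopies of degree $0$, so it has degree at least two. This is the EFS/Yanagawa argument (cf.\ \cite[Theorem~3.9]{26}), which I would adapt.
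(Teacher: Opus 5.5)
Your proposal is correct and is essentially the argument the paper relies on. The paper gives no proof of its own and only invokes \cite[Corollary~3.6]{11} and \cite[Theorem~3.9]{26}; your reduction of the double complex $\mathcal{F}(\mathcal{G}(N))$ to a minimal complex is exactly that Eisenbud--Fl{\o}ystad--Schreyer/Yanagawa argument. There the horizontal part has path length $0$ and the vertical part path length $1$, so the transferred corrections $\pi\delta(h\delta)^{k}\iota$ have path length $k+1$. When you write it up, make four points explicit:
\begin{enumerate}
\item It is the homological perturbation lemma, not the spectral sequence, that produces the minimal complex; the spectral sequence alone only computes cohomology.
\item The perturbation series terminates because $h\delta$ lowers the cohomological degree in the bounded complex $I^{\bullet}$.
\item The reduced complex has no path-length-zero part, so it is isomorphic to $P^{\bullet}$, and the path-length-zero component of that isomorphism matches the linear parts.
\item The $H^{i}(I^{\bullet})$ need not be coperfect in general (cf.\ Theorem~A), so your conclusion is best stated as an isomorphism of complexes $\mathrm{Lin}^{\Lambda}(P^{\bullet})\cong\bigoplus_{i}K\bigl(H^{i}(I^{\bullet})\bigr)[-i]$. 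This is stronger than the $\mathsf{D}^{b}(\Lambda\textup{-gmod})$ formulation and avoids its boundedness issue.
\end{enumerate}
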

\medskip

We are now in a position to establish the first main result of this paper. It refines ~\cite[Proposition~4.8]{27} and removes the noetherianity assumption imposed in~\cite[Theorem~4.5]{19}.

\begin{theorem}
A finite-dimensional Koszul algebra \(\Lambda\) is absolutely Koszul if and only if its Koszul dual \(\Lambda^{!}\) is graded left co-coherent.
\end{theorem}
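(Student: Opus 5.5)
The plan is to reduce both conditions to one statement about cohomology on the Koszul dual side, using the equivalence $\mathfrak{F}$, Proposition~3.4 and Proposition~3.5.

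\emph{Reformulating co-coherence.} Since $\Lambda^{!}\textup{-Fcp}^{\mathbb{Z}}$ is closed under cokernels of monomorphisms and under extensions, it is abelian exactly when it is closed under kernels. Equivalently, the kernel of every map $I^{0}\to I^{1}$ in $\Lambda^{!}\textup{-Inj}^{\mathbb{Z}}$ is finitely copresented. The first step is to show this is equivalent to a cohomology condition. Using $\Lambda^{!}\textup{-Cop}^{\mathbb{Z}}=\Lambda^{!}\textup{-GMod}^{-,b}$, I will prove that $\Lambda^{!}$ is graded left co-coherent if and only if every cohomology module $H^{i}(X^{\bullet})$ of every $X^{\bullet}\in\mathsf{D}^{b}(\Lambda^{!}\textup{-Cop}^{\mathbb{Z}})$ is again coperfect.

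\begin{itemize}
\item If every such cohomology is coperfect, take $X^{\bullet}=(I^{0}\to I^{1})$. Then $\ker = H^{0}$ lies in $\mathrm{Cop}^{\mathbb{Z}}\subseteq\mathrm{Fcp}^{\mathbb{Z}}$, so $\Lambda^{!}$ is co-coherent.
\item Conversely, co-coherence makes $\mathrm{Fcp}^{\mathbb{Z}}$ abelian, so the cohomology of complexes of injectives is finitely copresented. It then remains to pass from finitely copresented to coperfect.
\end{itemize}

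For that last step I will use the description of coperfect modules as those whose $K(M)$ is right bounded with bounded cohomology, together with the finite-dimensionality of $\Lambda$.

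\emph{Translating absolute Koszulity.} A module $N$ has a weakly Koszul syzygy if and only if $\mathrm{ld}_{\Lambda}(N)<\infty$, i.e. $\mathrm{Lin}^{\Lambda}(P^{\bullet})$ has bounded cohomology. Take $X^{\bullet}=\mathfrak{G}(N)$, a bounded colinear complex of injectives. By Proposition~3.5,
\[
\mathrm{Lin}^{\Lambda}(P^{\bullet})\cong\mathfrak{F}\Bigl(\bigoplus_{i}H^{i}(X^{\bullet})[-i]\Bigr).
\]
The extension of $\mathfrak{F}$ to $\mathsf{D}^{\downarrow}(\Lambda^{!}\textup{-GMod})\to\mathsf{D}^{\uparrow}(\Lambda\textup{-GMod})$ is a componentwise Koszul functor $K$ on each stalk. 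So the cohomology of $\mathrm{Lin}^{\Lambda}(P^{\bullet})$ is the direct sum of the cohomologies of $K(H^{i}(X^{\bullet}))$, suitably shifted. Hence $\mathrm{ld}_{\Lambda}(N)<\infty$ if and only if each of the finitely many $H^{i}(X^{\bullet})$ lies in $\Lambda^{!}\textup{-GMod}^{-,b}=\mathrm{Cop}^{\mathbb{Z}}$. Right boundedness is automatic, because $X^{\bullet}$ consists of finitely cogenerated injectives.

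\emph{Combining.} Every object of $\mathsf{D}^{b}(\Lambda^{!}\textup{-Cop}^{\mathbb{Z}})$ is $\mathfrak{G}$ of a bounded complex of $\Lambda$-modules. By truncation and the two-out-of-three property, it suffices to treat objects $\mathfrak{G}(N)$ with $N$ a module. Therefore:
\begin{itemize}
\item $\Lambda$ is absolutely Koszul if and only if all cohomology of objects of $\mathsf{D}^{b}(\mathrm{Cop}^{\mathbb{Z}})$ is coperfect;
\item by the first step, the latter holds if and only if $\Lambda^{!}$ is graded left co-coherent.
\end{itemize}

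The main obstacle is the direction co-coherent $\Rightarrow$ coperfect cohomology. There are two issues. First, the reduction from arbitrary bounded complexes in $\mathsf{D}^{b}(\mathrm{Cop}^{\mathbb{Z}})$ to two-term complexes of injectives must be handled carefully, since $\mathrm{Cop}^{\mathbb{Z}}$ need not be abelian. Second, one must show that a finitely copresented module has a \emph{finite} injective coresolution. I would first try to prove the second point by dimension shifting: the cosyzygies stay in $\mathrm{Fcp}^{\mathbb{Z}}$ by co-coherence, and Proposition~3.4 applied to $\mathfrak{F}$ of a truncated coresolution should force the coresolution to terminate because $\Lambda$ is finite-dimensional. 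If that fails, I would instead show that $\mathrm{Fcp}^{\mathbb{Z}}$ is a thick abelian subcategory containing $\mathrm{Cop}^{\mathbb{Z}}$. Then the weaker statement that cohomology lies in $\mathrm{Fcp}^{\mathbb{Z}}$ already gives bounded cohomology of $K(H^{i})$ via Proposition~3.4 and the finiteness of $\mathrm{Ext}$ over $\Lambda$.
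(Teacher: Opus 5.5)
Your translation step is correct, and it is what the paper uses in both directions: by Proposition~3.5, $\mathrm{ld}_{\Lambda}(N)<\infty$ if and only if every $H^{i}(\mathfrak{G}(N))$ lies in $\Lambda^{!}\textup{-GMod}^{-,b}=\Lambda^{!}\textup{-Cop}^{\mathbb{Z}}$.

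\textbf{The gap.} It is in the direction absolutely Koszul $\Rightarrow$ co-coherent, at the sentence ``by truncation and the two-out-of-three property, it suffices to treat objects $\mathfrak{G}(N)$''. Even for $X^{\bullet}=(I^{0}\to I^{1})$, the complex $\mathfrak{F}(X^{\bullet})$ can have cohomology in many degrees. Splitting it by truncation makes $X^{\bullet}$ an iterated extension of the $\mathfrak{G}(N_j)[-j]$. The long exact sequence then describes $H^{i}(X^{\bullet})$ through kernels and cokernels of connecting maps between coperfect modules. Two-out-of-three only handles short exact sequences, and closure of $\Lambda^{!}\textup{-Cop}^{\mathbb{Z}}$ under such kernels and cokernels is exactly the abelianness you are trying to prove. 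So the argument is circular.

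The missing idea is the paper's particular truncation. For $J^{\bullet}=(J^{0}\to J^{1})$, choose $n$ with $H^{i}(\mathfrak{F}(J^{\bullet}))=0$ for $i<n$. Take the stupid truncation of the right-bounded complex of finitely generated projectives $\mathfrak{F}(J^{\bullet})$.
Then $\sigma^{\le n}\mathfrak{F}(J^{\bullet})$ is a single finite-dimensional module placed in degree $n$, to which absolute Koszulity and Proposition~3.5 apply. So $A^{\bullet}:=\mathfrak{G}(\sigma^{\le n}\mathfrak{F}(J^{\bullet}))$ has coperfect cohomology. On the other side, $\sigma^{\ge n+1}\mathfrak{F}(J^{\bullet})$ is a bounded complex of projectives, so by Proposition~3.1, $B^{\bullet}:=\mathfrak{G}(\sigma^{\ge n+1}\mathfrak{F}(J^{\bullet}))$ has \emph{finite-dimensional} cohomology.
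Finite-dimensional $\Lambda^{!}$-modules are coperfect and closed under submodules and quotients, so two-out-of-three now suffices around $H^{0}(J^{\bullet})=M$. The image of $H^{-1}(B^{\bullet})$ in $H^{0}(A^{\bullet})$ is finite-dimensional, so its cokernel is coperfect. The kernel of $H^{0}(B^{\bullet})\to H^{1}(A^{\bullet})$ is finite-dimensional. Finally, $M$ is an extension of these two.

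\textbf{Two further corrections.} First, your criterion for co-coherence is backwards. $\Lambda^{!}\textup{-Fcp}^{\mathbb{Z}}$ is always closed under kernels and extensions; what can fail is closure under cokernels of monomorphisms. The kernel of any map $I^{0}\to I^{1}$ is finitely copresented by definition, so the inference ``$\ker\in\mathrm{Cop}\subseteq\mathrm{Fcp}$, hence co-coherent'' has no content. What $H^{0}\in\Lambda^{!}\textup{-Cop}^{\mathbb{Z}}$ for all such maps actually gives is $\mathrm{Fcp}=\mathrm{Cop}$. Then kernels stay in the category because of Fcp, and cokernels of monomorphisms because of two-out-of-three in Cop, so the category is abelian. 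This is how the paper concludes.

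Second, in the converse your first ``issue'' does not arise. $\mathfrak{G}(N)$ is already a bounded complex of finitely cogenerated injectives, so co-coherence puts its cohomology in Fcp directly. The second issue, $\mathrm{Fcp}\subseteq\mathrm{Cop}$ under co-coherence, is real. The paper does not prove it either; it takes ``co-coherent, so Cop is abelian'' as known. Your dimension-shifting idea settles it, without Proposition~3.4:
\begin{itemize}
\item Co-coherence keeps every cosyzygy finitely copresented, so each term of the minimal injective coresolution is finitely cogenerated.
\item The coresolution stops after at most $\ell$ steps, where $\ell$ is the top degree of $\Lambda$. Since $\Lambda^{!}$ is Koszul with $(\Lambda^{!})^{!}\cong\Lambda$, the linear projective resolution of each $S^{!}_{x}$ vanishes beyond step $\ell$. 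Hence $\operatorname{Ext}^{n}_{\Lambda^{!}}(S^{!}_{x}\langle j\rangle,-)=0$ for $n>\ell$.
\end{itemize}
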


\begin{proof}
Assume first that \(\Lambda\) is absolutely Koszul. By definition, every finite-dimensional graded \(\Lambda\)-module has finite linearity defect. Equivalently, every finite-dimensional graded module \(M\) has a weakly Koszul syzygy, or, what is the same, the linear part of its minimal projective resolution has bounded cohomology. By Proposition~3.5, it follows that
\[
\mathfrak{F}\!\left(\bigoplus_{i \in \mathbb{Z}} H^{i}(I^{\bullet})[-i]\right)
\;\cong\;
\mathrm{Lin}^{\Lambda}(P^{\bullet})
\]
has bounded cohomology. In particular, each object \(\mathfrak{F}(H^{i}(I^{\bullet})[-i])\) has bounded cohomology. By~\cite[Proposition~3.4]{4}, each \(H^{i}(I^{\bullet})\) is coperfect.

Now let \(M\) be a finitely copresented module. There exists an injective copresentation
\[
0 \longrightarrow M \longrightarrow J^{0} \longrightarrow J^{1}.
\]
Consider the complex
\[
J^{\bullet}\colon \cdots \longrightarrow 0 \longrightarrow J^{0} \longrightarrow J^{1} \longrightarrow 0 \longrightarrow \cdots.
\]
Clearly, \(\mathfrak{F}(J^{\bullet})\) has bounded cohomology. Since \(\mathfrak{F}(J^\bullet)\) is right bounded and has bounded cohomology, there exists \(n\in\mathbb Z\) such that
\[
H^i(\mathfrak{F}(J^\bullet))=0 \qquad \text{for all } i<n.
\]
Consider the stupid truncation triangle
\[
\sigma^{\le n}\mathfrak{F}(J^\bullet)
\longrightarrow
\mathfrak{F}(J^\bullet)
\longrightarrow
\sigma^{\ge n+1}\mathfrak{F}(J^\bullet)
\longrightarrow
\sigma^{\le n}\mathfrak{F}(J^\bullet)[1].
\]
The complex \(\sigma^{\le n}\mathfrak{F}(J^\bullet)\) is quasi-isomorphic to a shift of a module, since its cohomology vanishes in all degrees except possibly in degree \(n\). More precisely,
\[
\sigma^{\le n}\mathfrak{F}(J^\bullet)
\;\cong\;
H^{n}\!\bigl(\mathfrak{F}(J^\bullet)\bigr)[-n]
\]
in \(\mathsf{D}^{b}(\Lambda\textup{-gmod})\).

Since \(\Lambda\) is absolutely Koszul, the linear part of the minimal projective resolution of the module \(H^{n}(\mathfrak{F}(J^\bullet))\) has bounded cohomology. It follows that the cohomology objects of the bounded complex
\[
\mathfrak{G}\!\bigl(H^{n}(\mathfrak{F}(J^\bullet))[-n]\bigr)
\]
are coperfect graded modules.

Now apply \(\mathfrak{G}\) to the above exact triangle. We obtain an exact triangle in \(\mathsf{D}^{b}(\Lambda^{!}\textup{-Cop}^{\mathbb{Z}})\) of the form
\[
X^\bullet \longrightarrow J^\bullet \longrightarrow Y^\bullet \longrightarrow X^\bullet[1],
\]
such that \(Y^\bullet\) is a bounded complex with finite-dimensional cohomology and all the cohomology objects of \(X^\bullet\) are coperfect. Passing to the long exact sequence of cohomology and since \(\Lambda^{!}\textup{-Cop}^{\mathbb{Z}}\) satisfies the two-out-of-three property, it follows that
\[
H^{0}(J^\bullet)\cong M
\]
is a coperfect graded module. It follows that 
\(\Lambda^{!}\textup{-Cop}^{\mathbb{Z}}\) is abelian and therefore \(\Lambda^{!}\) is graded left co-coherent.
Conversely, assume that \(\Lambda^{!}\) is graded left co-coherent, so that the category \(\Lambda^{!}\textup{-Cop}^{\mathbb{Z}}\) is abelian. Let \(N\) be a finite-dimensional graded \(\Lambda\)-module. There exists a bounded colinear complex \(I^\bullet\) such that
\[
\mathfrak{F}(I^\bullet)\cong N.
\]
Since \(\Lambda^{!}\textup{-Cop}^{\mathbb{Z}}\) is abelian, all cohomology objects \(H^{i}(I^\bullet)\) are coperfect. Consequently,
\[
\mathfrak{F}(H^{i}(I^\bullet)[-i])
\]
has bounded cohomology for every \(i\). By Proposition~3.5, it follows that the linear part of the minimal projective resolution of \(N\) has bounded cohomology. Hence \(N\) has finite linearity defect. Therefore \(\Lambda\) is absolutely Koszul.
\end{proof}

We now record a first consequence of the preceding theorem. It concerns the
rationality of Poincar\'e series of finite-dimensional modules.

For a systematic investigation of Poincar\'e series of finite-dimensional
modules over Koszul algebras, Hilbert series, and their connection with Koszul
duality, we refer the reader to ~\cite{21}. 
\medskip

Recall that the Poincar\'e series of a
finite-dimensional \(\Lambda\)-module \(M\) is defined by
\[
P^{\Lambda}_{M}(t)
=
\sum_{n\geq 0}
\dim_{k}\operatorname{Ext}^{n}_{\Lambda}(M,\Lambda_{0})\,t^{n}.
\]
The following corollary generalizes~\cite[Theorem~4.7]{19}; the proof is
obtained by the same argument.

\begin{corollary}
Every finite-dimensional module over a finite-dimensional absolutely Koszul algebra has a rational Poincar\'e series.
\end{corollary}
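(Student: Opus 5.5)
The plan is to follow the strategy of \cite[Theorem~4.7]{19}, but to replace the noetherian hypothesis there by the co-coherence supplied by Theorem~3.6. Let \(M\) be a finite-dimensional graded \(\Lambda\)-module; the ungraded case should reduce to this one, since each finite-dimensional module over the graded algebra we need can be taken graded.

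First, use Theorem~3.6 and absolute Koszulity: some syzygy \(\Omega^{d}M\) is weakly Koszul, with \(d=\mathrm{ld}_\Lambda(M)\). Since
\[
P^{\Lambda}_{M}(t)=\sum_{n<d}\dim_k\operatorname{Ext}^n_\Lambda(M,\Lambda_0)\,t^n+t^{d}P^{\Lambda}_{\Omega^{d}M}(t),
\]
the first sum is a polynomial. It therefore suffices to prove rationality for a weakly Koszul module \(N\).

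For weakly Koszul \(N\), Koszul duality for weakly Koszul modules (\cite[Theorem~3.1]{16}, \cite[Theorem~4.4]{26}) identifies the Ext-algebra data \(\bigoplus_{i,m}\operatorname{Ext}^i(N,S_x\langle m\rangle)\) with a graded \(\Lambda^{!}\)-module \(N^{!}\). The exactness of \(\mathrm{Lin}^\Lambda(P^\bullet)\) means that in Proposition~3.5 the complex \(\mathfrak{G}(N)\) has a single cohomology, up to the filtration by generating degrees. Concretely, Proposition~3.4 gives
\[
\dim\operatorname{Ext}^i(N,S_x\langle m\rangle)=\dim H^{-m-i}(X^\bullet)_m(x).
\]
So \(P^\Lambda_N(t)\) is a finite sum, over the finitely many generating degrees, of Hilbert series of modules \(H^{j}(X^\bullet)\) with \(X^\bullet=\mathfrak{G}(N)\). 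By Theorem~3.6, \(\Lambda^{!}\) is graded left co-coherent, so these cohomology modules lie in the abelian category \(\Lambda^{!}\textup{-Cop}^{\mathbb{Z}}\). Hence each of them has a finite coresolution
\[
0\to H\to I^0\to\cdots\to I^r\to 0
\]
by finite sums of shifted \(I^{!}_y\).

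Taking Euler characteristics of Hilbert series then gives \(H_{H}(t)=\sum_j(-1)^j H_{I^j}(t)\), a \(\mathbb{Z}[t^{\pm1}]\)-combination of the series \(H_{I^{!}_y}(t)\). Each \(H_{I^{!}_y}\) is the Hilbert series of the column \(D\Lambda^{!\mathrm{op}}(y,-)\) of \(\Lambda^{!}\). The matrix Hilbert series of \(\Lambda^{!}\) is rational because \(\Lambda\) is Koszul: \(H_{\Lambda^{!}}(t)\,H_{\Lambda}(-t)^{T}=1\) (suitably transposed), and \(H_\Lambda\) is a polynomial matrix with constant term the identity, so it is invertible over \(\mathbb{Q}(t)\). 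Therefore every \(H_{I^{!}_y}(t)\) is rational, hence so is \(H_H(t)\), and after substituting \(t\mapsto t^{\pm1}\) to match the \(i\)-grading with the internal degree \(m\) (weakly Koszul modules are filtered by linear pieces, so the degrees are controlled), \(P^{\Lambda}_N(t)\) is rational.

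The main obstacle is the bookkeeping in the third step. One must check that for a weakly Koszul \(N\) the total dimension \(\sum_m \dim\operatorname{Ext}^i(N,S_x\langle m\rangle)\) is recovered from Hilbert series of finitely many coperfect \(\Lambda^{!}\)-modules with only a finite shift ambiguity. The plan is to handle this by using Proposition~3.5, where exactness of the linear part forces \(\mathfrak{G}(N)\) to have cohomology in the finitely many degrees where \(N\) has generators. Summing over those degrees then stays a finite sum of rational functions.
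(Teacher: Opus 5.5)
Your proposal is correct and is essentially the argument the paper intends. The paper proves this corollary only by saying that the proof of \cite[Theorem~4.7]{19} goes through, and you carry out exactly that proof in the present setting: the noetherianity of $\Lambda^{!}$ is replaced by the co-coherence from Theorem~3.6, Proposition~3.4 turns $P^{\Lambda}_N(t)$ into re-indexed Hilbert series of the coperfect modules $H^{j}(\mathfrak{G}(N))$, and finite injective coresolutions together with the rationality of the matrix Hilbert series of $\Lambda^{!}$ give rationality.

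Two minor remarks.
\begin{enumerate}
\item Proposition~3.4 gives $P^{\Lambda}_N(t)=\sum_j t^{-j}\sum_m \dim H^{j}(\mathfrak{G}(N))_m\,t^{-m}$ for every $N$. The sum over $j$ is finite simply because $\mathfrak{G}(N)$ is a bounded complex. So your \emph{bookkeeping obstacle} disappears once Theorem~3.6 is available, and so does the passage to a weakly Koszul syzygy.
\item Your aside that ungraded modules can always be taken graded is false in general. A counterexample is the representation $(k,k,k)$, with all maps the identity, of the quiver with arrows $1\to 2$, $1\to 3$ and $3\to 2$. This is harmless here, since the statement, like the rest of the paper, concerns graded modules.
\end{enumerate}
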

\begin{remark}
The statement of~\cite[Theorem~5.8]{19}, concerning the stable components of the Auslander--Reiten quiver of a self-injective Koszul algebra of Loewy length greater than three, remains valid in the present setting.
\end{remark}

We now turn to a fundamental question in the homological algebra of finite-dimensional algebras. 

The classical finitistic dimension conjecture, formulated by \textsc{Bass} in 1960, asserts that for every finite-dimensional algebra \(\Lambda\), the finitistic dimension
\[
\mathrm{fin\,dim}\,\Lambda
=
\sup\{\mathrm{pd}_\Lambda M
\mid
M \in \Lambda\textup{-mod},\ \mathrm{pd}_\Lambda M < \infty\}
\]
is finite. Despite substantial progress and numerous partial results, this conjecture has remained open for more than six decades and is widely regarded as one of the central problems in the homological theory of finite-dimensional algebras.

To the best of our knowledge, the graded analogue of this invariant has not been systematically investigated. For a finite-dimensional graded algebra \(\Lambda\), one may define the graded finitistic dimension by
\[
\mathrm{grfin\,dim}\,\Lambda
=
\sup\{\mathrm{pd}_{\Lambda\textup{-gmod}} M
\mid
M\in \Lambda\textup{-gmod},\ \mathrm{pd}_{\Lambda\textup{-gmod}} M<\infty\}.
\]
In particular, the behavior of the graded finitistic dimension for finite-dimensional Koszul algebras remains largely unexplored.

One of the main objectives of this paper is to study finite-dimensional Koszul algebras \(\Lambda\) via their Koszul duals \(\Lambda^{!}\). In what follows, we show that this invariant is closely related to a natural homological invariant on the Koszul dual side.

\medskip 
Let \(M=\bigoplus_{i\in\mathbb Z} M_i\) be a finite-dimensional graded
\(\Lambda\)-module. Set
\[
\deg_{\min}(M)=\inf\{\, i\in\mathbb Z \mid M_i\neq 0 \,\},
\qquad
\deg_{\max}(M)=\sup\{\, i\in\mathbb Z \mid M_i\neq 0 \,\}.
\]
The \emph{graded length} of \(M\) is defined by
\[
\ell_{\mathrm{gr}}(M)
=
 \deg_{\max}(M)-\deg_{\min}(M) + 1 .
\]

Let \(I^{\bullet}\) be a bounded colinear complex in
\[
\mathcal{CL}^{b}(\Lambda^{!}).
\]
Suppose that the graded module
\[
\bigoplus_{n\in\mathbb Z} H^{n}(I^{\bullet})\langle n\rangle
\]
is finite-dimensional.

Observe that if \(I^{\bullet}\) is such that
\(\mathfrak{F}(I^{\bullet})\cong M\), then, by Proposition~3.4, for all
\(x\in Q_0\) and \(i,m\in\mathbb Z\), there is a natural isomorphism
\[
H^{-i}(I^{\bullet})_m(x)
\cong
D\!\left(
\operatorname{Ext}^{i-m}_{\Lambda\textup{-}\mathrm{gmod}}
\bigl(M,S_x\langle m\rangle\bigr)
\right).
\]
In particular, for every \(x\in Q_0\) and every \(i\in\mathbb Z\),
\[
H^{-i}(I^{\bullet})_i(x)
\cong
D\!\left(
\operatorname{Hom}_{\Lambda\textup{-}\mathrm{gmod}}
\bigl(M,S_x\langle i\rangle\bigr)
\right).
\]
\begin{lemma}
Let \(M\) be a finite-dimensional graded \(\Lambda\)-module of finite
projective dimension. Then
\[
\operatorname{pd}_{\Lambda}(M)
=
\mathrm{CHL}_{\mathrm{gr}}(I^{\bullet})-1,
\]
where
\[
I^{\bullet}\in
\mathsf{D}^{b}\!\bigl(
\Lambda^{!}\textup{-}\mathrm{Cop}^{\mathbb{Z}}
\bigr)
\]
is such that \(\mathfrak{F}(I^{\bullet})\cong M\).
\end{lemma}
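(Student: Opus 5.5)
Throughout, I read \(\mathrm{CHL}_{\mathrm{gr}}(I^{\bullet})\) as the graded length of the total cohomology module introduced just before the statement:
\[
\mathrm{CHL}_{\mathrm{gr}}(I^{\bullet})
=
\ell_{\mathrm{gr}}\Bigl(\bigoplus_{n\in\mathbb Z} H^{n}(I^{\bullet})\langle n\rangle\Bigr).
\]
The plan is to compute this module degree by degree using Proposition~3.4, in the reindexed form recorded above:
\[
H^{-i}(I^{\bullet})_m(x)\cong D\operatorname{Ext}^{i-m}_{\Lambda\textup{-gmod}}\bigl(M,S_x\langle m\rangle\bigr).
\]
The point is that the twist \(\langle n\rangle\) absorbs the internal degree. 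As a result, the degree-\(j\) component of the total module sees exactly one cohomological degree of \(\operatorname{Ext}\), namely \(\operatorname{Ext}^{-j}\).

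Concretely, I would set \(n=-i\). By the convention \((N\langle n\rangle)_j=N_{j-n}\), we get
\[
\bigl(H^{-i}(I^{\bullet})\langle -i\rangle\bigr)_j(x)=H^{-i}(I^{\bullet})_{j+i}(x).
\]
Substituting \(m=j+i\) into Proposition~3.4 gives \(i-m=-j\), hence
\[
\Bigl(\bigoplus_{n} H^{n}(I^{\bullet})\langle n\rangle\Bigr)_j(x)
\;\cong\;
\bigoplus_{i\in\mathbb Z} D\operatorname{Ext}^{-j}_{\Lambda\textup{-gmod}}\bigl(M,S_x\langle i+j\rangle\bigr).
\]
Summing over \(x\in Q_0\) and letting \(i\) range, the right-hand side is the dual of \(\operatorname{Ext}^{-j}_{\Lambda\textup{-gmod}}(M,S\langle t\rangle)\) summed over all simples \(S\) and all shifts \(t\). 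This is exactly the space of generators of the \((-j)\)-th term \(P^{j}\) of the minimal graded projective resolution of \(M\).

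Next I would use that \(M\) has finite projective dimension \(d=\operatorname{pd}_{\Lambda}(M)\). For graded modules this equals the length of the minimal graded projective resolution, since the graded and ungraded minimal resolutions coincide after forgetting the grading. It follows that:
\begin{itemize}
\item the total module is finite-dimensional, because each \(P^{-k}\) is finitely generated and only finitely many terms occur;
\item its degree-\(j\) component is nonzero exactly when \(P^{j}\neq 0\), i.e. exactly when \(-d\le j\le 0\).
\end{itemize}
Nonvanishing at the two ends needs a separate check. At \(j=0\) it holds because \(M\neq 0\), so \(\operatorname{Hom}(M,S_x\langle t\rangle)\neq 0\) for some \(x,t\). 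At \(j=-d\) it holds by minimality: \(P^{-d}\neq 0\), and every generator of \(P^{-d}\) yields a nonzero class in \(\operatorname{Ext}^{d}(M,S_x\langle t\rangle)\), because the differentials of the Hom complex into simples vanish. Therefore \(\deg_{\max}=0\) and \(\deg_{\min}=-d\), so \(\ell_{\mathrm{gr}}=d+1\). This is the claimed identity \(\operatorname{pd}_{\Lambda}(M)=\mathrm{CHL}_{\mathrm{gr}}(I^{\bullet})-1\).

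I expect the main obstacle to be bookkeeping rather than substance. One issue is getting the sign of the shift \(\langle n\rangle\) and the substitution \(m=i+j\) consistent with the conventions of Lemma~3.2 and Proposition~3.4. The other is making sure the choice of \(I^{\bullet}\) with \(\mathfrak{F}(I^{\bullet})\cong M\) does not matter, which holds since its cohomology is determined up to isomorphism by the equivalence \(\mathfrak{G}\). I would also note explicitly that \(\operatorname{pd}\) is taken in \(\Lambda\textup{-gmod}\), which agrees with the ungraded one here.
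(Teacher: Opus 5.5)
Your argument is correct, but it reaches the identity by a different route than the paper.

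\textbf{The paper's route.} The paper goes through Proposition~3.5. It realizes $\mathrm{Lin}^{\Lambda}(P^{\bullet})$ as $\mathfrak{F}\bigl(\bigoplus_i H^{i}(I^{\bullet})[-i]\bigr)$ and observes that the linear part has the same terms as $P^{\bullet}$. It then asserts that the range of nonzero terms is ``measured by the graded support'' of $\bigoplus_i H^{i}(I^{\bullet})\langle i\rangle$.

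\textbf{Your route.} You never use the linear part. From Proposition~3.4 alone, you identify the degree-$j$ component of the total cohomology module with the dual of $\bigoplus_{t}\operatorname{Ext}^{-j}_{\Lambda\textup{-gmod}}(M,S_x\langle t\rangle)$. This is the generator space of $P^{j}$, and minimality then pins the support to exactly $[-d,0]$.

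\textbf{Comparison.} Your version has three advantages:
\begin{itemize}
\item It is more self-contained: Proposition~3.4 is proved in the paper, whereas Proposition~3.5 is only quoted.
\item It proves directly the correspondence the paper only asserts, namely that the nonzero degrees of $\bigoplus_i H^{i}(I^{\bullet})\langle i\rangle$ are exactly the cohomological positions of the nonzero terms of $P^{\bullet}$.
\item It makes the nonvanishing at both endpoints, and the independence from the choice of $I^{\bullet}$, explicit.
\end{itemize}
In exchange, the paper's route keeps $\operatorname{pd}_{\Lambda}$ inside the same linear-part framework it uses for linearity defect in Theorem~3.6 and Corollary~3.11.

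\textbf{One detail to state.} To get $\deg_{\max}=0$ you need the identification of Proposition~3.4 in negative Ext degrees too, while the proposition is only stated for $i\geq 0$. It extends by the same proof: Lemma~3.3 holds for every shift, and $\operatorname{Hom}_{\mathsf{D}^{b}(\Lambda\textup{-gmod})}\bigl(M,S_x\langle m\rangle[k]\bigr)=0$ for $k<0$.
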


\begin{proof}
Let
\[
P^{\bullet}\longrightarrow M
\]
be the minimal graded projective resolution of \(M\). By Proposition~3.5,
there exists a bounded colinear complex
\[
I^{\bullet}\in
\mathsf{D}^{b}\!\bigl(
\Lambda^{!}\textup{-}\mathrm{Cop}^{\mathbb{Z}}
\bigr)
\]
with finite-dimensional cohomology such that
\[
\mathfrak{F}(I^{\bullet})\cong M
\]
and
\[
\mathfrak{F}\!\left(
\bigoplus_{i\in\mathbb Z}
H^{i}(I^{\bullet})[-i]
\right)
\cong
\operatorname{Lin}^{\Lambda}(P^{\bullet}).
\]

Since \(P^{\bullet}\) is minimal, its linear part has the same nonzero
cohomological range as \(P^{\bullet}\). Under graded Koszul duality, this
range is measured by the graded support of
\[
\bigoplus_{i\in\mathbb Z}
H^{i}(I^{\bullet})\langle i\rangle.
\]
Hence
\[
\operatorname{pd}_{\Lambda}(M)+1
=
\ell_{\mathrm{gr}}\!\left(
\bigoplus_{i\in\mathbb Z}
H^{i}(I^{\bullet})\langle i\rangle
\right)
=
\mathrm{CHL}_{\mathrm{gr}}(I^{\bullet}),
\]
and therefore
\[
\operatorname{pd}_{\Lambda}(M)
=
\mathrm{CHL}_{\mathrm{gr}}(I^{\bullet})-1.
\]
\end{proof}
The following result concerning the graded finitistic dimension of a finite-dimensional Koszul algebra is now an immediate consequence of Lemma~3.9.

\begin{theorem}
Let \(\Lambda\) be a finite-dimensional Koszul algebra with Koszul dual \(\Lambda^{!}\). Then
\[
\mathrm{grfin\,dim}(\Lambda)
=
\sup
\left\{
\mathrm{CHL}_{\mathrm{gr}}(I^{\bullet})
\;\middle|\;
I^{\bullet}\in
\mathcal{CL}^{b}(\Lambda^{!})
\right\}
-1.
\]
In particular, the graded finitistic dimension of \(\Lambda\) is finite if and only if
\[
\sup
\left\{
\mathrm{CHL}_{\mathrm{gr}}(I^{\bullet})
\;\middle|\;
I^{\bullet}\in
\mathcal{CL}^{b}(\Lambda^{!})
\right\}
< \infty.
\]
\end{theorem}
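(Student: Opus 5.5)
The plan is to derive the equality directly from Lemma~3.9 by comparing two suprema: one over finite-dimensional graded \(\Lambda\)-modules of finite projective dimension, the other over bounded colinear complexes in \(\mathcal{CL}^{b}(\Lambda^{!})\). The supremum on the right is read as ranging over those \(I^{\bullet}\) for which \(\bigoplus_{n}H^{n}(I^{\bullet})\langle n\rangle\) is finite-dimensional, as in the standing hypothesis preceding Lemma~3.9, so that \(\mathrm{CHL}_{\mathrm{gr}}(I^{\bullet})\) is finite. The two inequalities are then proved separately.

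\emph{Step 1: the inequality \(\le\).} Let \(M\in\Lambda\textup{-gmod}\) with \(\operatorname{pd}M<\infty\). Proposition~3.5 (together with the essential surjectivity of \(\mathfrak{G}\)) provides a bounded colinear complex \(I^{\bullet}\) with \(\mathfrak{F}(I^{\bullet})\cong M\); concretely one may take the minimal injective model of \(\mathfrak{G}(M)\). Proposition~3.4 identifies \(H^{-i}(I^{\bullet})_m(x)\) with \(D\operatorname{Ext}^{i-m}(M,S_x\langle m\rangle)\). Since \(M\) is finite-dimensional and has finite projective dimension, only finitely many of these Ext groups are nonzero, each of them finite-dimensional. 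Hence \(\bigoplus_n H^n(I^{\bullet})\langle n\rangle\) is finite-dimensional and \(I^{\bullet}\) is admissible. Lemma~3.9 then gives \(\operatorname{pd}M=\mathrm{CHL}_{\mathrm{gr}}(I^{\bullet})-1\), which is at most the right-hand side.

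\emph{Step 2: the inequality \(\ge\).} Let \(I^{\bullet}\in\mathcal{CL}^{b}(\Lambda^{!})\) be admissible, and put \(N:=\mathfrak{F}(I^{\bullet})\in\mathsf{D}^{b}(\Lambda\textup{-gmod})\). This is the step I expect to be the main obstacle, since \(N\) need not be a stalk complex, and Lemma~3.9 is stated only for modules. Two routes are available.
\begin{enumerate}
\item Extend Proposition~3.4 and Lemma~3.9 verbatim to complexes. The proof of Proposition~3.4 only used \(\operatorname{Hom}_{\mathsf{D}^b}(N,S_x\langle m\rangle[i])\), so it applies to complexes. Finite-dimensionality of \(\bigoplus_n H^n(I^{\bullet})\langle n\rangle\) then says that \(\operatorname{Hom}(N,S\langle m\rangle[i])\) vanishes for all but finitely many pairs \((i,m)\). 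Thus \(N\) is perfect, i.e.\ lies in \(\mathsf{K}^{b}(\Lambda\textup{-proj}^{\mathbb{Z}})\), and its minimal projective model has length \(\mathrm{CHL}_{\mathrm{gr}}(I^{\bullet})\).
\item Replace \(N\) by a module of the same projective length, in order to feed it into the grfindim supremum. One takes a suitable syzygy or cone (the standard trick: a bounded minimal complex of projectives of length \(\ell\) can be truncated to give a module of projective dimension at least \(\ell-1\)).
\end{enumerate}
I would try the first route and then take the brutal truncation of the minimal perfect complex at its top degree. The cokernel of the last differential is a module \(M'\) whose minimal resolution is that truncation, so \(\operatorname{pd}M'=\mathrm{CHL}_{\mathrm{gr}}(I^{\bullet})-1\). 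Minimality guarantees that the length is not reduced.

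\emph{Step 3: conclusion.} Combining Steps~1 and~2 yields the displayed equality, with both sides possibly equal to \(\infty\). The finiteness criterion is then immediate, since \(\mathrm{grfin\,dim}(\Lambda)<\infty\) holds exactly when the supremum of \(\mathrm{CHL}_{\mathrm{gr}}\) is finite.
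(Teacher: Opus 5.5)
Your Step~2 does not work as written. The brutal-truncation claim is false. If \(P\) is a minimal bounded complex of graded projectives with top term \(P^{b}\), the cokernel of \(P^{b-1}\to P^{b}\) is \(H^{b}(P)\), and \(\sigma^{\le b}P\) resolves it only when \(P\) is acyclic below degree \(b\). Minimality does not help, and the ``standard trick'' of route~2 fails for the same reason. For instance, take \(\Lambda=k[x]/(x^{2})\) with \(x\) in degree \(1\). It is Koszul (with \(\Lambda^{!}=k[y]\)) and self-injective, so every graded module of finite projective dimension is projective and \(\mathrm{grfin\,dim}\,\Lambda=0\). The minimal perfect complex \(\Lambda\oplus\Lambda[n]\) has width \(n+1\), yet the cokernel of its last differential is \(\Lambda\). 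For \(\Lambda\langle 1\rangle\xrightarrow{\,x\,}\Lambda\), of width \(2\), the cokernel is the simple module, which has infinite projective dimension. So if the supremum really ranged over objects \(I^{\bullet}\) with \(\mathfrak{F}(I^{\bullet})\) an arbitrary perfect complex, the right-hand side would be \(\infty\) for this \(\Lambda\) and the equality would be false. No argument for \(\ge\) can succeed in the generality you set up.

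The missing observation is that this case never occurs. A bounded colinear complex of injective \(\Lambda^{!}\)-modules is, up to isomorphism, \(G(M)\) for a unique \(M\in\Lambda\textup{-gmod}\):
\begin{itemize}
\item the multiplicity of \(I^{!}_{x}\langle -n\rangle\) in degree \(n\) gives \(M_{n}(x)\);
\item the components of the differential are given by arrows of \(Q^{\mathrm{op}}\) (degree-one elements of \(\Lambda^{!}\)) and define the arrow actions;
\item \(d^{2}=0\) is exactly the quadratic relations of \((\Lambda^{!})^{!}=\Lambda\).
\end{itemize}
This is the colinear counterpart of the equivalence \(\Lambda^{!}\textup{-gmod}\simeq\mathcal{LC}^{b}(\Lambda\textup{-proj}^{\mathbb{Z}})\) recalled in Section~2, and it is how the paper uses \(\mathcal{CL}^{b}(\Lambda^{!})\), namely as the complex associated to a module. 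Hence \(\mathfrak{F}(I^{\bullet})\cong\mathfrak{F}\bigl(G(M)\bigr)\cong M\) is always a stalk complex.

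For the inequality \(\ge\), admissibility of \(I^{\bullet}\) together with Proposition~3.4 gives \(\operatorname{Ext}^{i}_{\Lambda\textup{-gmod}}(M,S_{x}\langle m\rangle)=0\) for all but finitely many \((i,m)\). So \(\operatorname{pd}M<\infty\), and Lemma~3.9 yields \(\mathrm{CHL}_{\mathrm{gr}}(I^{\bullet})-1=\operatorname{pd}M\le\mathrm{grfin\,dim}\,\Lambda\). Both inequalities are then direct applications of Lemma~3.9, which is why the paper calls the theorem an immediate consequence of it. Your reading of the supremum as ranging over admissible \(I^{\bullet}\), and your Steps~1 and~3, are fine.
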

In~\cite{12}, Green et al.\ showed that the graded finitistic dimension is finite for the category of Koszul modules. Since the graded derived Koszul duality \(\mathfrak{F}\) sends coKoszul modules to Koszul modules, while its quasi-inverse \(\mathfrak{G}\) sends Koszul modules to coKoszul modules; see~\cite[Proposition~3.1]{5}, it follows that the graded lengths of coKoszul modules over \(\Lambda^{!}\) are uniformly bounded. We therefore obtain the following result.

\begin{corollary}
Let \(\Lambda\) be an absolutely Koszul algebra such that \(\mathrm{gl\,ld}_{\Lambda}<\infty\). Then
\[
\mathrm{grfin\,dim}(\Lambda)<\infty.
\]
\end{corollary}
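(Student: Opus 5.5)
The plan is to reduce the statement to Theorem~3.10. We need a uniform bound on $\mathrm{CHL}_{\mathrm{gr}}(I^{\bullet})$ over all bounded colinear complexes $I^{\bullet}$ with $\mathfrak{F}(I^{\bullet})\cong M$, where $M$ ranges over finite-dimensional graded $\Lambda$-modules of finite projective dimension. By Lemma~3.9 this is the same as bounding $\operatorname{pd}_{\Lambda}(M)$ uniformly, so we work directly with projective dimensions.

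Set $d=\mathrm{gl\,ld}_{\Lambda}$. For such an $M$ with minimal resolution $P^{\bullet}$, the cohomology of $\mathrm{Lin}^{\Lambda}(P^{\bullet})$ vanishes in degrees beyond $d$. This means the syzygy $\Omega^{d}M$ is weakly Koszul. Since $\operatorname{pd}\Omega^{d}M<\infty$, it suffices to bound the projective dimension of weakly Koszul modules of finite projective dimension.

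The next step is to reduce from weakly Koszul modules to Koszul modules. A weakly Koszul module $W$ has a filtration $0=U_{-1}\subset U_{0}\subset\cdots\subset U_{s}=W$ in which each quotient $U_{j}/U_{j-1}$ is generated in a single degree $j$ (up to shift) and is linear, i.e.\ a shifted Koszul module. This is the standard characterization of weakly Koszul modules due to Mart\'inez-Villa--Zacharia and Herzog--Iyengar.

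Alternatively, on the Koszul dual side one can use Proposition~3.5. Exactness of the linear part means $\bigoplus_i H^{i}(I^{\bullet})[-i]$ is concentrated so that $\mathfrak{F}$ of each $H^{i}(I^{\bullet})$ is a module. After shifting, each cohomology object is then coKoszul, and $\mathfrak{F}$ of it is Koszul by \cite[Proposition~3.1]{5}. We then apply the result of Green et al.~\cite{12}: there is a uniform bound $c$ on the graded projective dimension of Koszul modules of finite projective dimension. Equivalently, the graded lengths of coKoszul $\Lambda^{!}$-modules are uniformly bounded.

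The main obstacle is passing from the filtration pieces to $W$ itself. A weakly Koszul $W$ of finite projective dimension need not have Koszul quotients $U_{j}/U_{j-1}$ of finite projective dimension, and the number $s$ of pieces is not a priori bounded. We intend to handle this through Lemma~3.9 by bounding $\ell_{\mathrm{gr}}$ on the dual side.

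For $W$ weakly Koszul, the complex $I^{\bullet}$ of Proposition~3.5 has $\mathrm{Lin}^{\Lambda}(P^{\bullet})\simeq\mathfrak{F}(\bigoplus_i H^{i}(I^{\bullet})[-i])$, which is acyclic except in degree $0$. Hence each $H^{i}(I^{\bullet})\langle i\rangle$ contributes a single linear strand $\mathrm{Lin}(P^{\bullet})$ splitting into Koszul pieces. The total $\ell_{\mathrm{gr}}$ of $\bigoplus_i H^{i}(I^{\bullet})\langle i\rangle$ is therefore a maximum over strands, not a sum. The resolution length of $W$ is the maximum of the strand lengths, and each strand is the resolution of a Koszul module.

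We expect the delicate point to be showing that a strand corresponding to a Koszul piece of infinite projective dimension cannot occur when $\operatorname{pd}W<\infty$. The argument is that the linear part is a summand-wise degeneration of a minimal resolution: if $P^{\bullet}$ is bounded, so is its linear part, and hence every strand is bounded. Granting this, each strand has length at most $c+1$. This gives $\operatorname{pd}W\le c$ and
\[
\operatorname{pd}M\le d+c .
\]
Theorem~3.10 then yields $\mathrm{grfin\,dim}(\Lambda)\le d+c<\infty$.
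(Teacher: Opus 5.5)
Your argument is correct, but it follows a different route from the paper's.

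\textbf{The paper's route.} The paper stays on the Koszul dual side. It takes the colinear complex $I^{\bullet}$ attached to $\Omega^{n}(M)$ and asserts that every $H^{j}(I^{\bullet})$ is colinear. It then bounds the graded length of each $H^{j}(I^{\bullet})$ via Green et al., concludes that $\mathrm{CHL}_{\mathrm{gr}}(I^{\bullet})$ is uniformly bounded, and applies Theorem~3.10.

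\textbf{Your route.} You work on the $\Lambda$ side instead.
\begin{itemize}
\item The linear part $\mathrm{Lin}^{\Lambda}(P^{\bullet})$ of the minimal resolution of the weakly Koszul module $W=\Omega^{d}M$ splits as a direct sum of linear strands $L_s$.
\item Each $L_s$ is exact except in degree $0$, and its differentials lie in the radical. So $L_s$ is the minimal linear resolution of a shifted Koszul module $K_s=H^{0}(L_s)$; a nonzero strand cannot be acyclic, by Nakayama.
\item Since the strands carry the same terms as $P^{\bullet}$, one gets $\operatorname{pd}W=\max_s \operatorname{pd}K_s$.
\item Because $P^{\bullet}$ is bounded, every strand is bounded. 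Hence each $K_s$ has finite projective dimension, and the Green et al.\ bound gives $\operatorname{pd}K_s\le c$.
\end{itemize}

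\textbf{What your route buys.}
\begin{itemize}
\item It makes explicit the two facts the paper tacitly uses when it passes from ``each $H^{j}(I^{\bullet})$ has bounded graded length'' to ``$\mathrm{CHL}_{\mathrm{gr}}(I^{\bullet})$ is bounded.''
\item First, each piece corresponds to a Koszul module of finite projective dimension. This is what makes the piece finite-dimensional and lets Green et al.\ apply at all.
\item Second, the pieces combine by a maximum over strands. They do not combine by a sum, nor with uncontrolled relative shifts.
\item You also get the explicit bound $\mathrm{grfin\,dim}(\Lambda)\le d+c$.
\end{itemize}

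\textbf{Simplifications.}
\begin{itemize}
\item With your argument, Lemma~3.9 and Theorem~3.10 are not needed. The inequality $\operatorname{pd}M\le d+c$ for every $M$ of finite projective dimension is already the conclusion, by the definition of $\mathrm{grfin\,dim}$.
\item The filtration paragraph and the ``alternatively'' paragraph on the dual side can be dropped.
\item The absolutely Koszul hypothesis is automatic once $\mathrm{gl\,ld}_{\Lambda}<\infty$.
\end{itemize}
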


\begin{proof}
Assume that \(\Lambda\) is a finite-dimensional absolutely Koszul algebra with \(\mathrm{gl\,ld}_{\Lambda}=n\). Let \(M\) be a finite-dimensional graded \(\Lambda\)-module with finite projective dimension. Then the \(n\)-th syzygy \(\Omega^{n}(M)\) is weakly Koszul.

Let \(I^{\bullet}\) be the bounded colinear complex associated to \(\Omega^{n}(M)\) via the graded derived Koszul duality. By the characterization of weakly Koszul modules, it follows that all cohomology objects \(H^{j}(I^{\bullet})\) are colinear for every \(j\in\mathbb{Z}\).

By~\cite[Theorem~4.5]{12}, the finitistic dimension of Koszul modules is finite. Since the graded derived Koszul duality functor \(\mathfrak{F}\) sends coKoszul modules to minimal projective resolutions of Koszul modules, it follows that coKoszul modules have uniformly bounded graded length. In particular, there exists a uniform bound on the graded lengths of the cohomology objects \(H^{j}(I^{\bullet})\).

Consequently, the graded cohomological length \(\mathrm{CHL}_{\mathrm{gr}}(I^{\bullet})\) is uniformly bounded. The desired conclusion now follows from Theorem~3.10, namely,
\[
\mathrm{grfin\,dim}(\Lambda)<\infty.
\]
\end{proof}
\begin{remark}
By \cite{5}, quadratic monomial algebras are absolutely Koszul and have global linearity defect at most one. In particular, their graded finitistic dimension is finite.
\end{remark}
Motivated by the preceding result, we formulate the following conjecture, which is known to hold for quadratic monomial algebras and, more generally, for monomial algebras.

\begin{conjecture}
Let \(\Lambda\) be a finite-dimensional graded algebra. If the graded finitistic dimension of \(\Lambda\) is finite, then the finitistic dimension of \(\Lambda\) is finite.
\end{conjecture}

In the monomial case, syzygies admit a particularly simple description, which allows one to establish the conjecture. For general finite-dimensional Koszul algebras, however, no comparably explicit description of syzygies is available, and the problem remains open.

\medskip

We now use the characterization of absolutely Koszul algebras in terms of their Koszul duals in order to refine the Koszul dualities established in~\cite{4}. We begin by recalling these dualities in the general case, without any co-coherence assumption on \(\Lambda^{!}\).

\begin{theorem}[{\cite[Theorems~3.8, 3.16, 3.17, 4.3, 4.6, 4.8]{4}}]
Let \(\Lambda\) be a finite-dimensional Koszul algebra with Koszul dual \(\Lambda^{!}\). Then the following hold.

\begin{enumerate}\itemsep=0.4em

\item[\textup{(i)}] \emph{Graded derived Koszul duality.}
There is a triangulated equivalence
\[
\mathfrak{F} \colon
\mathsf{D}^{b}\bigl(\Lambda^{!}\textup{-Cop}^{\mathbb{Z}}\bigr)
\;\xrightarrow{\ \sim\ }\;
\mathsf{D}^{b}\bigl(\Lambda\textup{-gmod}\bigr).
\]

\item[\textup{(ii)}] \emph{Graded singular Koszul duality.}
There is a triangulated equivalence
\[
\mathsf{D}^{b}\!\Bigl(\Lambda^{!}\textup{-Cop}^{\mathbb{Z}}\big/\Lambda^{!}\textup{-gmod}\Bigr)
\;\xrightarrow{\ \sim\ }\;
\mathsf{D}_{\mathrm{sg}}\bigl(\Lambda\textup{-gmod}\bigr).
\]
If, in addition, \(\Lambda\) is Iwanaga--Gorenstein, then there is a triangulated equivalence
\[
\mathsf{D}^{b}\!\Bigl(\Lambda^{!}\textup{-Cop}^{\mathbb{Z}}\big/\Lambda^{!}\textup{-gmod}\Bigr)
\;\xrightarrow{\ \sim\ }\;
\Lambda\textup{-}\underline{\mathrm{Gproj}}^{\mathbb{Z}}.
\]

\item[\textup{(iii)}] \emph{Ungraded derived and singular Koszul dualities.}
There are triangulated equivalences
\[
\mathrm{H}^{0}\!\Bigl(
\operatorname{pretr}\bigl(
\mathsf{D}_{\mathrm{dg}}^{b}(\Lambda^{!}\textup{-Cop}^{\mathbb{Z}})
\big/\langle 1 \rangle[-1]
\bigr)
\Bigr)
\;\xrightarrow{\ \sim\ }\;
\mathsf{D}^{b}\bigl(\Lambda\textup{-mod}\bigr),
\]
\[
\mathrm{H}^{0}\!\Bigl(
\operatorname{pretr}\bigl(
\mathsf{D}_{\mathrm{dg}}^{b}(\Lambda^{!}\textup{-Cop}^{\mathbb{Z}}\big/\Lambda^{!}\textup{-gmod})
\big/\langle 1 \rangle[-1]
\bigr)
\Bigr)
\;\xrightarrow{\ \sim\ }\;
\mathsf{D}_{\mathrm{sg}}\bigl(\Lambda\textup{-mod}\bigr).
\]
If, in addition, \(\Lambda\) is Iwanaga--Gorenstein, then there is a triangulated equivalence
\[
\mathrm{H}^{0}\!\Bigl(
\operatorname{pretr}\bigl(
\mathsf{D}_{\mathrm{dg}}^{b}(\Lambda^{!}\textup{-Cop}^{\mathbb{Z}}\big/\Lambda^{!}\textup{-gmod})
\big/\langle 1 \rangle[-1]
\bigr)
\Bigr)
\;\xrightarrow{\ \sim\ }\;
\Lambda\textup{-}\underline{\mathrm{Gproj}}.
\]

\end{enumerate}

Here \(\mathrm{H}^{0}\!\bigl(\operatorname{pretr}(-)\bigr)\) denotes the homotopy category of the pretriangulated hull of the corresponding dg orbit category; see~\cite[Subsection~2.4]{4} for further details.
\end{theorem}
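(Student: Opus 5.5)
The statement is a citation of results from~\cite{4}, so the plan is to recall how each equivalence is assembled there from the basic Koszul functors \(\mathfrak{F}\) and \(\mathfrak{G}\) of Section~2, and to check that the conventions adopted here (in particular the grading shift, which is opposite to that of~\cite{4}, cf.\ Lemma~3.2) transport correctly.

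For (i), I would start from the functor \(\mathcal{F}\) on the double complex \(B^{q,p}=K(M^{p})^{q}\). First, I would show that \(\mathcal{F}\) sends acyclic bounded complexes in \(\Lambda^{!}\textup{-Cop}^{\mathbb{Z}}\) to acyclic complexes, using the Acyclic Assembly Lemma. This works because the columns are right bounded and each row is bounded. Hence \(\mathfrak{F}\) is well defined on \(\mathsf{D}^{b}\). Second, I would prove full faithfulness by dévissage. Every object of \(\Lambda^{!}\textup{-Cop}^{\mathbb{Z}}\) has a finite coresolution by objects of \(\Lambda^{!}\textup{-Inj}^{\mathbb{Z}}\), so the category \(\mathsf{D}^{b}\) is generated by the shifts \(I^{!}_{x}\langle m\rangle\). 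By Proposition~3.1 and Lemma~3.2, \(\mathfrak{F}(I^{!}_{x}\langle m\rangle)\cong S_{x}\langle m\rangle[-m]\). It then suffices to compare
\(\operatorname{Hom}(I^{!}_{x}\langle m\rangle, I^{!}_{y}\langle m'\rangle[j])\) with \(\operatorname{Ext}^{j+m-m'}_{\Lambda}(S_{x}\langle m\rangle,S_{y}\langle m'\rangle)\). Both sides are computed by the linear Koszul resolution, i.e.\ the identification \(\Lambda^{!}\cong\operatorname{Ext}^{*}_{\Lambda}(\Lambda_{0},\Lambda_{0})\) holds diagonally. Third, I would get essential surjectivity because the simples \(S_{x}\langle m\rangle\) generate \(\mathsf{D}^{b}(\Lambda\textup{-gmod})\), since finite-dimensional modules have finite graded length. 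The quasi-inverse is \(\mathfrak{G}\).

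For (ii), I would identify the image of \(\mathsf{D}^{b}(\Lambda^{!}\textup{-gmod})\) under \(\mathfrak{F}\) with \(\mathsf{K}^{b}(\Lambda\textup{-proj}^{\mathbb{Z}})\). Proposition~3.1 sends simples to projectives, and finite-dimensional modules are finite extensions of simples. Passing to Verdier quotients then gives the singular equivalence. In the Iwanaga--Gorenstein case I would compose with Buchweitz's equivalence \(\Lambda\textup{-}\underline{\mathrm{Gproj}}^{\mathbb{Z}}\simeq\mathsf{D}_{\mathrm{sg}}(\Lambda\textup{-gmod})\).

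For (iii), I would use dg enhancements and Keller's orbit categories. The forgetful functor \(\Lambda\textup{-gmod}\to\Lambda\textup{-mod}\) identifies \(\mathsf{D}^{b}(\Lambda\textup{-mod})\) with the triangulated hull of the orbit category of \(\mathsf{D}^{b}(\Lambda\textup{-gmod})\) under \(\langle1\rangle\). Via Lemma~3.2, the shift \(\langle1\rangle\) corresponds on the dual side to \(\langle1\rangle[\mp1]\). This explains the orbit by \(\langle1\rangle[-1]\) in the convention of~\cite{4}. The main obstacle is this step: one must verify the hypotheses of Keller's theorem, namely that the orbit functor is compatible with the enhancement and that the morphism spaces are finite direct sums. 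I would try the approach of~\cite{4}, which checks Keller's criteria on the generators \(S_{x}\). The singular and Gorenstein versions follow by taking the same orbit construction on the quotients from (ii).
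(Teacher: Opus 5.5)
The paper gives no proof of this theorem; it is quoted from \cite{4}, so your sketch has to be judged on its own.

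Your argument for (i) works as a d\'evissage, provided you make two points explicit:
\begin{enumerate}
\item The map \emph{induced by} $\mathfrak{F}$ on $\operatorname{Hom}(I^{!}_{x}\langle m\rangle, I^{!}_{y}\langle m'\rangle)$ is bijective, not merely abstractly isomorphic to a piece of $\operatorname{Ext}^{*}_{\Lambda}(\Lambda_0,\Lambda_0)$. This holds because on morphisms it is $K$, $K$ is an equivalence onto linear complexes, and linear complexes admit no nonzero null-homotopies.
\item All Homs in nonzero shifts vanish on both sides. On one side this follows from injectivity of $I^{!}_{y}$ in $\Lambda^{!}\textup{-GMod}$, hence in the extension-closed category $\Lambda^{!}\textup{-Cop}^{\mathbb{Z}}$; on the other, from Koszulity.
\end{enumerate}

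The genuine gap is in (ii). Its left-hand side is the bounded derived category of the \emph{exact quotient category} $\Lambda^{!}\textup{-Cop}^{\mathbb{Z}}/\Lambda^{!}\textup{-gmod}$. ``Passing to Verdier quotients'' only gives
\[
\mathsf{D}^{b}\bigl(\Lambda^{!}\textup{-Cop}^{\mathbb{Z}}\bigr)\big/\operatorname{thick}\bigl(\Lambda^{!}\textup{-gmod}\bigr)
\xrightarrow{\ \sim\ }
\mathsf{D}^{b}\bigl(\Lambda\textup{-gmod}\bigr)\big/\mathsf{K}^{b}\bigl(\Lambda\textup{-proj}^{\mathbb{Z}}\bigr)
=\mathsf{D}_{\mathrm{sg}}\bigl(\Lambda\textup{-gmod}\bigr).
\]

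To reach the stated equivalence you still have to show two things. First, that $\Lambda^{!}\textup{-Cop}^{\mathbb{Z}}/\Lambda^{!}\textup{-gmod}$ carries an exact structure for which the projection is exact. Second, that its bounded derived category is this Verdier quotient. Miyachi's theorem for Serre quotients of abelian categories is not available. By Theorem~A, $\Lambda^{!}\textup{-Cop}^{\mathbb{Z}}$ is abelian only when $\Lambda$ is absolutely Koszul, which is not assumed here.

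What is needed is a localization theorem for exact categories, for instance in the framework of Schlichting's s-filtering subcategories or of percolating subcategories, together with a check of its hypotheses for $\Lambda^{!}\textup{-gmod}\subset\Lambda^{!}\textup{-Cop}^{\mathbb{Z}}$. The natural input is the two-out-of-three property of $\Lambda^{!}\textup{-Cop}^{\mathbb{Z}}$. For any morphism between a coperfect and a finite-dimensional module, the image is finite-dimensional and the resulting kernel or cokernel is again coperfect, so the required factorizations are admissible. This is exactly the content behind the paper's remark that these exact quotients remain well defined without coherence.

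The singular half of (iii), and its Gorenstein variant, inherit the same gap, since they use $\mathsf{D}^{b}_{\mathrm{dg}}$ of this quotient. In addition, ``the same orbit construction'' must be supplemented by identifying $\mathsf{D}_{\mathrm{sg}}(\Lambda\textup{-mod})$ with the triangulated hull of the $\langle 1\rangle$-orbit category of $\mathsf{D}_{\mathrm{sg}}(\Lambda\textup{-gmod})$. One way to do this is via $\operatorname{Hom}_{\mathsf{D}_{\mathrm{sg}}}(M,N)\cong\varinjlim_{n}\underline{\operatorname{Hom}}(\Omega^{n}M,\Omega^{n}N)$, whose terms split over $\langle i\rangle$ compatibly with the transition maps.
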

Now suppose that \(\Lambda\) is absolutely Koszul. Then the category of coperfect graded \(\Lambda^{!}\)-modules coincides with the category of finitely copresented graded \(\Lambda^{!}\)-modules. In this case, the Koszul dualities recalled above admit the following refinement, as announced in the introduction.

\begin{corollary}
Let \(\Lambda\) be a finite-dimensional absolutely Koszul algebra. Then the following triangulated equivalences hold.

\begin{enumerate}\itemsep=0.4em

\item[\textup{(i)}] \emph{Graded derived Koszul duality.}
\[
\mathsf{D}^{b}\bigl(\Lambda^{!}\textup{-Fcp}^{\mathbb{Z}}\bigr)
\;\xrightarrow{\ \sim\ }\;
\mathsf{D}^{b}\bigl(\Lambda\textup{-gmod}\bigr).
\]

\item[\textup{(ii)}] \emph{Graded singular Koszul duality.}
\[
\mathsf{D}^{b}\!\Bigl(
\Lambda^{!}\textup{-Fcp}^{\mathbb{Z}} \big/ \Lambda^{!}\textup{-gmod}
\Bigr)
\;\xrightarrow{\ \sim\ }\;
\mathsf{D}_{\mathrm{sg}}\bigl(\Lambda\textup{-gmod}\bigr).
\]

\item[\textup{(iii)}] \emph{Graded BGG correspondence.}
If \(\Lambda\) is Iwanaga--Gorenstein, then there is a triangulated equivalence
\[
\mathsf{D}^{b}\!\Bigl(
\Lambda^{!}\textup{-Fcp}^{\mathbb{Z}} \big/ \Lambda^{!}\textup{-gmod}
\Bigr)
\;\xrightarrow{\ \sim\ }\;
\Lambda\textup{-}\underline{\mathrm{Gproj}}^{\mathbb{Z}}.
\]

\item[\textup{(iv)}] \emph{Ungraded derived Koszul duality.}
\[
\mathrm{H}^{0}\!\Bigl(
\operatorname{pretr}\bigl(
\mathsf{D}_{\mathrm{dg}}^{b}\bigl(\Lambda^{!}\textup{-Fcp}^{\mathbb{Z}}\bigr)
\big/ \langle 1 \rangle[-1]
\bigr)
\Bigr)
\;\xrightarrow{\ \sim\ }\;
\mathsf{D}^{b}\bigl(\Lambda\textup{-mod}\bigr).
\]

\item[\textup{(v)}] \emph{Ungraded singular Koszul duality.}
\[
\mathrm{H}^{0}\!\Bigl(
\operatorname{pretr}\bigl(
\mathsf{D}_{\mathrm{dg}}^{b}\bigl(
\Lambda^{!}\textup{-Fcp}^{\mathbb{Z}} \big/ \Lambda^{!}\textup{-gmod}
\bigr)
\big/ \langle 1 \rangle[-1]
\bigr)
\Bigr)
\;\xrightarrow{\ \sim\ }\;
\mathsf{D}_{\mathrm{sg}}\bigl(\Lambda\textup{-mod}\bigr).
\]

\item[\textup{(vi)}] \emph{Ungraded BGG correspondence.}
If \(\Lambda\) is Iwanaga--Gorenstein, then there is a triangulated equivalence
\[
\mathrm{H}^{0}\!\Bigl(
\operatorname{pretr}\bigl(
\mathsf{D}_{\mathrm{dg}}^{b}\bigl(
\Lambda^{!}\textup{-Fcp}^{\mathbb{Z}} \big/ \Lambda^{!}\textup{-gmod}
\bigr)
\big/ \langle 1 \rangle[-1]
\bigr)
\Bigr)
\;\xrightarrow{\ \sim\ }\;
\Lambda\textup{-}\underline{\mathrm{Gproj}}.
\]

\end{enumerate}
\end{corollary}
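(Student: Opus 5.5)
The plan is to derive every item by substituting \(\Lambda^{!}\textup{-Fcp}^{\mathbb{Z}}\) for \(\Lambda^{!}\textup{-Cop}^{\mathbb{Z}}\) in the corresponding statement of Theorem~3.14. The whole argument therefore comes down to one equality of subcategories of \(\Lambda^{!}\textup{-GMod}\):
\[
\Lambda^{!}\textup{-Cop}^{\mathbb{Z}}=\Lambda^{!}\textup{-Fcp}^{\mathbb{Z}},
\]
together with the fact that this identification respects the relevant exact structures.

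\textbf{Step 1: the two categories agree.} Since \(\Lambda\) is absolutely Koszul, Theorem~3.6 says that \(\Lambda^{!}\) is graded left co-coherent.
\begin{itemize}
\item One inclusion is immediate. A coperfect module \(N\) has a finite coresolution \(0\to N\to I^{0}\to I^{1}\to\cdots\), and truncating it shows \(N\) is finitely copresented.
\item The other inclusion is exactly what the first half of the proof of Theorem~3.6 establishes. Starting from a copresentation \(0\to M\to J^{0}\to J^{1}\), it proves that \(M=H^{0}(J^{\bullet})\) is coperfect.
\end{itemize}
Hence the two full subcategories coincide. Because \(\Lambda^{!}\textup{-Fcp}^{\mathbb{Z}}\) is abelian, so is \(\Lambda^{!}\textup{-Cop}^{\mathbb{Z}}\). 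Its exact structure (closed under extensions, with the two-out-of-three property) then consists of all short exact sequences in \(\Lambda^{!}\textup{-GMod}\) between its objects, which is the abelian structure of \(\Lambda^{!}\textup{-Fcp}^{\mathbb{Z}}\). So the two bounded derived categories are literally the same triangulated category.

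\textbf{Step 2: the quotients and dg enhancements agree.} Every finite-dimensional graded \(\Lambda^{!}\)-module is coperfect: it is a finite iterated extension of shifted simples, and \(\mathfrak{G}\) sends the simples of \(\Lambda\) to the objects \(I^{!}_{x}\) by Proposition~3.1. It follows that \(\Lambda^{!}\textup{-gmod}\) is a Serre subcategory of the abelian category \(\Lambda^{!}\textup{-Fcp}^{\mathbb{Z}}\), closed under subobjects, quotients and extensions. The quotient
\[
\Lambda^{!}\textup{-Fcp}^{\mathbb{Z}}\big/\Lambda^{!}\textup{-gmod}
\]
is therefore the same abelian category as the exact quotient \(\Lambda^{!}\textup{-Cop}^{\mathbb{Z}}/\Lambda^{!}\textup{-gmod}\) used in~\cite{4}.

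Likewise, the dg enhancements \(\mathsf{D}^{b}_{\mathrm{dg}}\) and the orbit categories by \(\langle 1\rangle[-1]\) depend only on the exact category. They are thus unchanged; the grading shift \(\langle1\rangle\) preserves both categories.

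\textbf{Step 3: read off the equivalences.} Substituting into Theorem~3.14(i)--(iii) gives (i)--(vi) directly, with the Iwanaga--Gorenstein hypothesis carried over for (iii) and (vi).

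\textbf{Expected obstacle.} The main delicate point is Step 2. One must check that the quotient of the exact category \(\Lambda^{!}\textup{-Cop}^{\mathbb{Z}}\) by \(\Lambda^{!}\textup{-gmod}\) used in~\cite{4} agrees with the Serre quotient of the abelian category, including their derived categories. I would handle this by observing that when the ambient exact category is abelian, the construction in~\cite{4} reduces to the Gabriel quotient. I would also verify that \(\Lambda^{!}\textup{-gmod}\) is closed under subquotients inside \(\Lambda^{!}\textup{-Fcp}^{\mathbb{Z}}\), which holds because subobjects and quotients of a finite-dimensional module are finite-dimensional.
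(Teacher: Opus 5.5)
Your proposal is correct and follows the paper's own argument. The paper just notes that absolute Koszulity gives \(\Lambda^{!}\textup{-Cop}^{\mathbb{Z}}=\Lambda^{!}\textup{-Fcp}^{\mathbb{Z}}\) (via Theorem~3.6 and its proof) and substitutes this into Theorem~3.14; your Steps~1--3 make explicit the compatibility of exact structures, Serre quotients and dg orbit categories that the paper leaves implicit.

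One small correction is needed in Step~2. To show that finite-dimensional \(\Lambda^{!}\)-modules are coperfect, use the other half of Proposition~3.1: \(\mathfrak{G}(P_x)\) is a minimal injective coresolution of \(S^{!}_x\) by objects of \(\Lambda^{!}\textup{-Inj}^{\mathbb{Z}}\), and it is bounded because \(P_x\) is finite-dimensional. The fact \(\mathfrak{G}(S_x)\cong I^{!}_x\) that you cited says nothing about the simples \(S^{!}_x\).
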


We conclude the paper by answering a question raised by Green et al.~\cite{12}. Recall that a finite-dimensional graded \(\Lambda\)-module \(M\) is said to have a \emph{linear presentation} if it admits a presentation
\[
P^{-1}\longrightarrow P^{0}\longrightarrow M\longrightarrow 0,
\]
where \(P^{0}\) is generated in degree \(0\) and \(P^{-1}\) is generated in degree \(-1\).

\medskip 

Green et al.~\cite{12} observed that a necessary and sufficient condition on
\(\Lambda\) ensuring that the category of modules with linear presentations,
denoted by \(\mathcal{L}(\Lambda)\), coincides with the category of Koszul
modules, denoted by \(\mathcal{K}(\Lambda)\), was not known. Several necessary
or sufficient conditions were subsequently obtained; see, for
instance,~\cite{1,13}. A complete characterization, however, remained open.
We provide such a characterization in terms of the Koszul dual algebra
\(\Lambda^{!}\).

In fact, the finite-dimensionality hypothesis on \(\Lambda\) may be replaced
by local finite-dimensionality, and the result remains valid in this more
general setting.

\begin{theorem}
Let \(\Lambda\) be a finite-dimensional Koszul algebra. Then the following
conditions are equivalent:
\begin{enumerate}
\item[(a)] \(\mathcal{L}(\Lambda)=\mathcal{K}(\Lambda)\).

\item[(b)] Every graded \(\Lambda^{!}\)-module concentrated in degrees
\(-1\) and \(0\) is weakly coKoszul.

\item[(c)] Every graded \(\Lambda^{!}\)-module concentrated in degrees
\(-1\) and \(0\) can be completed to a coKoszul module.
\end{enumerate}
\end{theorem}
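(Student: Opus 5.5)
The plan is to translate everything through the Koszul functor \(K\) and the duality \(\mathfrak{F},\mathfrak{G}\), using that a module \(M\) with linear presentation is, up to the equivalence \(\Lambda^{!}\textup{-gmod}\simeq\mathcal{LC}^{b}(\Lambda\textup{-proj}^{\mathbb{Z}})\), exactly the cokernel of a two-term linear complex \(P^{-1}\to P^{0}\). Such a complex is \(K(L)\) for a unique graded \(\Lambda^{!}\)-module \(L\) concentrated in degrees \(-1\) and \(0\). Conversely, every such \(L\) yields a two-term linear complex whose cokernel \(M_L\) has a linear presentation (after discarding split summands, which do not affect linearity). Thus \(L\mapsto M_L\) is essentially surjective onto \(\mathcal{L}(\Lambda)\), and the whole statement reduces to comparing, for this \(L\), the property ``\(M_L\) is Koszul'' with properties of \(L\).

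\textbf{(a)\(\Leftrightarrow\)(c).} Suppose \(M_L\) is Koszul, with linear minimal resolution \(P^{\bullet}\). Then \(P^{\bullet}\) is a right-bounded linear complex, so \(P^{\bullet}\cong K(C)\) for some \(C\in\Lambda^{!}\textup{-GMod}\). Its truncation in cohomological degrees \(-1,0\) recovers \(K(L)\), so \(L\) is the quotient of \(C\) in degrees \(\geq -1\). The exactness of \(P^{\bullet}\) away from degree \(0\), together with Proposition~3.4 applied to \(\mathfrak{G}(M_L)\), shows that \(\operatorname{Ext}^{n}(S^{!}_x\langle i\rangle,C)\) vanishes off the diagonal. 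Hence \(C\) (suitably shifted) is coKoszul and ``completes'' \(L\). Conversely, if \(L\) extends to a coKoszul \(C\) whose degree \(\{-1,0\}\) part is \(L\), then \(\mathfrak{F}\) sends \(C\) to a linear resolution, as recalled before Corollary~3.11 via \cite[Proposition~3.1]{5}. The last two terms of that resolution are \(K(L)\), so \(M_L\) is Koszul.

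\textbf{(b)\(\Leftrightarrow\)(a).} Here I would use the weakly Koszul duality \cite[Theorem~4.4]{26}, combined with Proposition~3.5. The module \(M_L\) has a linear presentation, so the linear part of its minimal resolution agrees with the resolution in degrees \(-1,0\). By Proposition~3.5, \(\mathrm{Lin}(P^{\bullet})\) is computed by \(\mathfrak{F}\) of the cohomology of the colinear complex attached to \(M_L\), and this complex is built from \(L\). Then:
\begin{itemize}
\item \(M_L\) is Koszul exactly when \(P^{\bullet}=\mathrm{Lin}(P^{\bullet})\) is acyclic in negative degrees;
\item because the presentation is already linear, a minimality argument shows this happens exactly when \(M_L\) is weakly Koszul;
\item by the dual form of the weakly Koszul duality, \(M_L\) is weakly Koszul exactly when \(L\) is weakly coKoszul.
\end{itemize}
For the middle step: a nonlinear component appearing in a later differential would create cohomology of \(\mathrm{Lin}(P^{\bullet})\) at the first place where linearity fails.

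The main obstacle is precisely this middle step, namely showing that for modules generated in one degree with a linear presentation, \(\mathrm{ld}=0\) forces a linear resolution. My first attempt would be induction on the homological degree. If \(\Omega^{j}\) is generated in degree \(j\) and the linear part is exact at step \(j\), then any generator of \(\Omega^{j+1}\) in degree \(>j+1\) lies in the kernel of the linear differential but not in its image, which contradicts exactness. Checking that the identification of \(L\) with the relevant cohomology of \(\mathfrak{G}(M_L)\) respects the grading conventions of Lemma~3.2 is the other delicate bookkeeping point.
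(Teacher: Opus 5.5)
\textbf{Gap: your third bullet is false, and so is (a)$\Rightarrow$(b) itself.} The claim ``$M_L$ is weakly Koszul exactly when $L$ is weakly coKoszul'' does not hold. The weakly Koszul duality and Proposition~3.5 relate $M_L$ to $\mathfrak{G}(M_L)$, that is, to the full Ext-module $C$ from your (a)$\Leftrightarrow$(c) paragraph. They say nothing about the two-degree truncation $L$.

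Indeed, $\mathfrak{F}(L)=K(L)=(P^{-1}\to P^{0})$ is not quasi-isomorphic to $M_L$. When $L$ has socle in degree $0$, $K(L)$ is a minimal presentation and $H^{-1}(K(L))=\Omega^{2}M_L$. Running the argument of Proposition~3.5 on the $\Lambda^{!}$ side, $L$ is weakly coKoszul iff each $H^{i}(\mathfrak{F}(L))$ is linear \emph{and generated in the internal degree dictated by $i$}. But $\Omega^{2}M_L$ is generated two internal degrees away from the generators of $P^{0}$, not one. So (b) forces $\Omega^{2}M=0$ for every minimally presented $M\in\mathcal{L}(\Lambda)$. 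Applying this to the simple modules, which have linear presentations, (b) forces $\operatorname{pd}S_x\le 1$ for all $x$, i.e.\ $\Lambda$ hereditary.

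\textbf{Counterexample.} This cannot be repaired. Take $\Lambda=kQ/J^{2}$ with $Q\colon 1\to 2\to 3$ and $J$ the arrow ideal.
\begin{itemize}
\item $\Lambda$ is Koszul of global dimension $2$.
\item Every graded module generated in degree $0$ has a semisimple first syzygy generated in a single degree. Since simples are Koszul, every such module is Koszul, so $\mathcal{L}(\Lambda)=\mathcal{K}(\Lambda)$ and (a) holds.
\item Here $\Lambda^{!}=kQ^{\mathrm{op}}$. The linear presentation $P_2\langle -1\rangle\to P_1\to S_1\to 0$ equals $K(L)$ for the two-dimensional uniserial $L$ with $L_0$ at vertex $1$ and $L_{-1}$ at vertex $2$.
\item The minimal injective coresolution of $L$ is $0\to L\to I^{!}_1\to I^{!}_3\to 0$. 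Here $I^{!}_3$ is cogenerated two degrees away from $I^{!}_1$, since the map is given by the path of length two.
\item Hence the colinear part of this coresolution has zero differential and cohomology $I^{!}_3\neq 0$ in degree $1$. So $L$ is not weakly coKoszul and (b) fails.
\end{itemize}

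The paper's own proof of (a)$\Rightarrow$(b) breaks at the same point. It notes that $H^{-1}(\mathfrak{F}(N))$ is a syzygy and hence Koszul ``up to shift''. The characterization of weakly coKoszul modules does not allow an arbitrary shift. The direction (b)$\Rightarrow$(a) is fine, in your argument and in the paper's. If $L$ is weakly coKoszul, then $H^{0}(K(L))=M_L$ is Koszul.

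\textbf{Two smaller remarks.}
\begin{itemize}
\item The step you flagged as the main obstacle is harmless. Splitting $\mathrm{Lin}(P^{\bullet})$ into its linear strands shows directly that a weakly Koszul module generated in a single degree is Koszul.
\item Your (a)$\Leftrightarrow$(c) translation through $K$ is correct only for $L$ with socle in degree $0$. A summand $S^{!}_x$ placed in degree $-1$ contributes $P_x\langle -1\rangle\to 0$ to $K(L)$. This summand is not contractible, so ``discarding split summands'' does not remove it. Then ``the truncation recovers $K(L)$'' fails, and such an $L$ cannot be completed to a module cogenerated in degree $0$. Condition (c) must be restricted to this class.
\end{itemize}
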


\begin{proof}
We only prove the equivalence between \textup{(a)} and \textup{(b)}, since the proof of the equivalence between \textup{(a)} and \textup{(c)} is analogous.

Assume that
\(\mathcal{L}(\Lambda)=\mathcal{K}(\Lambda)\), and let \(N\) be a graded
\(\Lambda^{!}\)-module concentrated in degrees \(-1\) and \(0\). Under
Koszul duality, \(\mathfrak{F}(N)\) is a two-term linear complex of graded
projective \(\Lambda\)-modules. In particular, it has the form
\[
P^{-1}\longrightarrow P^{0},
\]
and \(H^{0}(\mathfrak{F}(N))\) has a linear presentation. Hence
\[
H^{0}(\mathfrak{F}(N))\in\mathcal{L}(\Lambda)
=\mathcal{K}(\Lambda).
\]
Moreover, since the complex is linear, its other nonzero cohomology module
\(H^{-1}(\mathfrak{F}(N))\) is, up shift, a
syzygy of \(H^{0}(\mathfrak{F}(N))\), and is therefore Koszul. Thus all
cohomology modules of \(\mathfrak{F}(N)\) are Koszul. By the
characterization of weakly coKoszul modules under graded Koszul duality,
\(N\) is weakly coKoszul. This proves \textup{(a)} \(\Rightarrow\)
\textup{(b)}.

Conversely, assume \textup{(b)}, and let
\(M\in\mathcal{L}(\Lambda)\). Choose a linear presentation
\[
P^{-1}\longrightarrow P^{0}\longrightarrow M\longrightarrow0.
\]
The corresponding two-term linear complex is of the form
\(\mathfrak{F}(N)\) for a graded \(\Lambda^{!}\)-module \(N\) concentrated
in degrees \(-1\) and \(0\). By assumption, \(N\) is weakly coKoszul.
Hence the zero cohomology module of \(\mathfrak{F}(N)\) is Koszul. In
particular,
\[
M\cong H^{0}(\mathfrak{F}(N))
\]
is Koszul. Therefore
\(\mathcal{L}(\Lambda)\subseteq\mathcal{K}(\Lambda)\). The reverse
inclusion is immediate, since every Koszul module has a linear
presentation. Consequently,
\[
\mathcal{L}(\Lambda)=\mathcal{K}(\Lambda).
\]
Thus \textup{(a)} and \textup{(b)} are equivalent.
\end{proof}

\end{document}